\setlist{nolistsep}
\newtheoremstyle{plain}{3mm}{3mm}{\slshape}{}{\bfseries}{.}{.5em}{}
\newtheoremstyle{definition}{2mm}{2mm}{}{}{\bfseries}{.}{.5em}{}
\theoremstyle{plain}
\newtheorem{theorem}{Theorem}
\newtheorem{lemma}[theorem]{Lemma}
\theoremstyle{definition}
\theoremstyle{plain}
\newtheorem*{namedthm}{\namedthmname}
\newcounter{namedthm}
\newcommand{\N}{\mathbb{N}}
\newcommand{\al}{\alpha}
\newcommand{\cantor}{\overset{\infty}{\underset{l=1}{\cap}}C_l}
\title{Open intervals in sums and products of Cantor sets.}
\author{Aritro Pathak}
\begin{document}
\maketitle

\begin{abstract}
     We give new arguments for sums and products of sufficient numbers of arbitrary central Cantor sets to produce large open intervals. We further discuss the same question for $C^1$ images of such central Cantor sets. This gives another perspective on the results obtained by Astels through a different formulation on the thickness of these Cantor sets. There has been recent interest in the question of products and sums of powers of Cantor sets, and these are addressed by our methods. 
\end{abstract}

\section{Introduction}
Define the central Cantor set $C_{\al}=\cantor$ in the canonical way as the countable intersection of the sets $C_{l}$ so that the set $C_{1}$ is obtained from the unit interval by removing a middle interval $O_0$ of length $\alpha$ while keeping the intervals $A_1=[0,(1-\alpha)/2]$ and $A_2=[(1+\alpha)/2,1]$, each of measure $(1-\alpha)/2$. Further, $C_2$ is obtained from $C_1$ by deleting a middle interval of proportional length of $\alpha$, deleting in the process the intervals $O_{1}, O_{2}$ respectively from the segments $A_1, A_2$, creating four segments $A_{11}, A_{12},A_{21}, A_{22}$ in the process. This process would iterate, and at the $l$'th step, we have $2^{l}$ many segments labelled $A_{i_1\dots i_l}$ with $i_m\in \{0,1\} \ \forall \ m=1,2,\dots,k$, whose union is $C_l$. In the $l$'th step, we have removed from the segments of the $(l-1)$'th step the open middle intervals of fractional length $\alpha$, that are labelled $O_{i_1\dots i_{l-1}}$ with $i_m\in \{0,1\} \ \forall m=1,\dots,l-1$.

For $\alpha=1/3$, we get the middle third cantor set.

This paper gives an alternate dynamical method for constructing open intervals in the sums and products of a large set of Cantor sets, a question addressed in a general setting in \cite{Astels} which also followed a dynamical approximation argument. We extend the methods first presented in \cite{Pat} for just the middle third Cantor set. For the question of finding open intervals in the products of Cantor sets, that has arisen more recently in certain dynamical problems as in \cite{Takahashi}, \cite{Astels} and \cite{Takahashi} have dealt with logarithmic Cantor sets and then employed the methods for sums of Cantor sets, whereas our method presents a direct argument following \cite{Pat} that works for the product question independently of the question of Cantor sum-sets. \footnote{The methods here can also be adopted for the question of other polynomial or rational expressions of Cantor sets, although we have not pursued it here.}

For simplicity we speak momentarily of all the Cantor sets being the same but the idea is the same for the case of sums of different Cantor sets. Given a Cantor set $C_\alpha =\cantor$, \cite{Astels} works, for each $l$, with sum-sets of the disjoint segments whose union constitute $C_l$, and constructs the largest possible open interval so that each point within this open interval lies in the sum-set of these fixed number of copies of $C_l$ and this is done for each $l$. The underlying idea is essentially the following theorem in elementary analysis  which is also the approach of \cite{Athreya};

\begin{theorem}
   Suppose $\{K_i\} \subset \mathbb{R}$ are nonempty compact  sets such that $K_{1}\supset K_2 \supset K_3 \supset ..$, and $K= \cap K_{i}$. If $F: \mathbb{R}^{m} \to \mathbb{R}$ is continuous, then $F(K^{m})=\cap F(K_{i}^{m})$.
\end{theorem}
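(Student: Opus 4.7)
The plan is to prove the two containments separately. The forward containment $F(K^m) \subseteq \bigcap_i F(K_i^m)$ is immediate: since $K \subseteq K_i$ for every $i$, we have $K^m \subseteq K_i^m$, hence $F(K^m) \subseteq F(K_i^m)$, and intersecting over $i$ gives the claim. No compactness or continuity is needed here.

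For the reverse containment $\bigcap_i F(K_i^m) \subseteq F(K^m)$, I would fix $y \in \bigcap_i F(K_i^m)$ and, for each $i$, choose a preimage $x_i \in K_i^m$ with $F(x_i)=y$. The nesting $K_1 \supseteq K_2 \supseteq \cdots$ together with compactness of $K_1^m$ (finite product of compact sets in $\mathbb{R}$) means the sequence $(x_i)$ lives in the compact set $K_1^m$, so a subsequence $x_{i_j}$ converges to some $x \in K_1^m$.

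The next step is to verify $x \in K^m$. For any fixed $n$, once $i_j \geq n$ we have $x_{i_j} \in K_{i_j}^m \subseteq K_n^m$; since $K_n^m$ is closed, the limit $x$ also lies in $K_n^m$. As $n$ is arbitrary, $x \in \bigcap_n K_n^m = \bigl(\bigcap_n K_n\bigr)^m = K^m$, where the last equality uses that intersections distribute over finite Cartesian products. Finally, continuity of $F$ gives $F(x) = \lim_j F(x_{i_j}) = y$, so $y \in F(K^m)$.

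The proof is essentially a standard diagonal-style compactness argument, and there is no serious obstacle; the only mild subtlety worth stating carefully is the identification $\bigcap_n K_n^m = \bigl(\bigcap_n K_n\bigr)^m$, which holds because the $K_n$ are nested (without nesting, only the inclusion $\supseteq$ is automatic). Once that is in place the result follows directly from the Bolzano--Weierstrass property on $K_1^m$ and the continuity of $F$.
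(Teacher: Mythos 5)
Your proof is correct, and since the paper states this theorem without proof (citing it as the elementary fact underlying the approach of Athreya et al.), the standard compactness argument you give is exactly what is intended: the forward inclusion is trivial, and the reverse inclusion follows by extracting a convergent subsequence of preimages in the compact set $K_1^m$ and using closedness of each $K_n^m$ together with continuity of $F$. One small correction: the identification $\bigcap_n K_n^m = \bigl(\bigcap_n K_n\bigr)^m$ does not actually require the nesting --- since each $K_n^m$ is the $m$-fold product of a single set, membership is checked coordinatewise and the equality is just an interchange of two universal quantifiers --- so your cautionary parenthetical is unnecessary (though harmless, as the sets here are nested anyway).
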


Here, $K^m$ denotes the $k$-fold Cartesian product of $K$.

Astels defines and considers the thickness of the general class of Cantor sets as defined in \cite{Astels} in order to construct the open intervals. The case of products of Cantor sets is then dealt with by first considering the logarithmic version of these Cantor sets.

Our dynamical approximation technique is a-priori conceptually different. Again for a sufficient number of copies of the same Cantor set $C_{\al}$, for a point $x$ in the open interval we start out with the points of $C_{\al}$ whose sum or product approximate $x$ to first order. These points in the first approximation all belong to the set of end points of the segments whose union constitute $C_{l_0}$ for some fixed $l_0$. In the next stage of approximation, we carefully choose endpoints of segments that constitute $C_{l_1}$ $l_{1}=l_0 +1$, which give a  controlled and better approximation of the point under consideration. We continue this for each value of $l$ so that the sums or products converge to the point $x$ in the limit as $l\to \infty$, and in the process we have Cauchy sequences of elements of the Cantor set whose limiting points sum or multiply to the point $x$. This enables us to treat the sum and product questions independent of each other.

The methods presented here are applicable to the case where we are looking for open intervals in the set of sums and products of the type:
\begin{equation*}
    \sum\limits_{i} \phi_i (C_{i}), \prod\limits_{i} \phi_i (C_{i})
\end{equation*}

where each of the functions $\phi_i$ are continuously differentiable, with upper and lower bounds on this derivative in any closed sub-interval of the support of $C_i$ and the arguments of this paper can be easily adopted for this case. For the case presented in Lemma 2 and Theorem 3, we have the special case of $\phi_i(x) =x^m$ for each $i$.\footnote{In this case one can work, instead of the upper bound of Equation 15, with some different upper bound corresponding to the bounds on the derivatives of the $\phi_i$'s.}

For the approach of Astels, one refers to \cite{Astels}. For any positive integer $m$, the case of sums of $m$'th powers of Cantor sets has attracted special attention recently, and we state our theorems for the central Cantor sets and $m$'th powers. For $m=1$, the results reduce to sums of Cantor sets. 

Finally in the Section 7, we outline the statements of results one would get for sums and products of general $C^1$ images of Cantor sets.

For more recent and earlier work on these questions, including connections to the Newhouse gap lemma and Palis' conjecture, one can refer to \cite{Athreya, Pat,Takahashi,Cantor,Cabrelli1,Cabrelli2,Yoccoz,Mult,Guo,Halmos,Majumdar,Marchese,Newhouse,Aritro,Randolph,Utz,Sum}. One notes in particular the questions raised about products of Cantor sets in the concluding section of \cite{Takahashi}. One also notes in particular the paper of \cite{Solomyak}, which gives us the sums of just two two middle-$\alpha$ Cantor sets typically has positive Lebesgue measure when the sum of the Hausdorff dimensions is greater than 1. The question of the Lebesgue measure of the sums of Cantor sets which have been addressed with Fourier analytic techniques, is not addressed with our methods.


\section{Statement of results}

Fix $\alpha \in (0,1)$. For this fixed $\alpha$ let $C_{\alpha}$ be the Cantor set as in the introduction. We define the following technical quantities:
$$s_{\alpha,m} = \left\lceil \frac{1}{\left(\frac{1+\alpha}{2}\right)^{m-1}} \right\rceil, \qquad k_{\alpha,m}=\begin{cases} \left\lceil \frac{\frac{3\alpha  - 1}{2}}{\left(\frac{1+\alpha}{2}\right)^{m-1}\left(\frac{1-\alpha^2}{4}\right)} \right\rceil & \alpha>1/3  \\
0 & \alpha \leq 1/3 \end{cases},$$
\noindent and let $r_{\alpha,m}=2s_{\alpha,m}+2k_{\alpha,m}$. Set  $$\Gamma_{\alpha, m} = \left\{\sum_{k=1}^{r_{\alpha,m}} c_k^m : c_k \in C_{\alpha}\right\}.$$

In other words, $\Gamma_{\al,m}$ contains $r_{\alpha,m}$ many $m$'th powers of elements of $C_\alpha$.
With this, we first prove in Section 2 that,

\begin{lemma} \label{thm:thm2}
Given the central Cantor set $C_\alpha$, $s_{\alpha,m},k_{\alpha,m},r_{\alpha,m}$ and $\Gamma_{\alpha,m}$ as above, let $$I = \left[\Big(\frac{r_{\alpha,m}}{2}-1\Big)+\Big(\frac{r_{\alpha,m}}{2}+1\Big)\left(\frac{1+\alpha}{2}\right)^m, \ \Big(\frac{r_{\alpha,m}}{2}+1\Big)+\Big(\frac{r_{\alpha,m}}{2}-1\Big)\left(\frac{1+\alpha}{2}\right)^m\right]$$
then, $I \subseteq \Gamma_{\alpha, m}$.
\end{lemma}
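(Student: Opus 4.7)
Writing $\beta = (1+\alpha)/2$ for brevity, the plan is to use the dynamical approximation scheme described in the introduction. For each target $x \in I$ I would construct, for $k = 1, \ldots, r_{\alpha,m}$, sequences $(c_k^{(l)})_{l \geq 1}$ with each $c_k^{(l)}$ an endpoint of some segment of $C_l$, such that
\[
\sum_{k=1}^{r_{\alpha,m}} \bigl(c_k^{(l)}\bigr)^m \longrightarrow x \quad \text{as } l \to \infty,
\]
and each sequence $(c_k^{(l)})_l$ is Cauchy. Passing to the limit produces $c_k = \lim_l c_k^{(l)} \in C_\alpha$ with $\sum c_k^m = x$, placing $x \in \Gamma_{\alpha,m}$ as desired.

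For the base case at level $l = 1$, the only available values are $\{0, (1-\alpha)/2, \beta, 1\}$, and the two endpoints of $I$ are, by direct computation, the values of $\sum (c_k^{(1)})^m$ when $(r_{\alpha,m}/2 - 1)$ (respectively $r_{\alpha,m}/2 + 1$) of the $c_k^{(1)}$'s equal $1$ and the rest equal $\beta$. Swapping coordinates one at a time between $1$ and $\beta$ produces a discrete sequence of sums spaced by $1 - \beta^m$, so any $x \in I$ lies within $(1-\beta^m)/2$ of one of these values, fixing an initial configuration. For the inductive step, assume $c_k^{(l)}$'s have been chosen with error $\delta_l := |x - \sum (c_k^{(l)})^m|$ under control. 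The level-$l$ segment containing $c_k^{(l)}$ has length $((1-\alpha)/2)^l$ and contains four natural ``nearby'' endpoints at level $l+1$ (the endpoints of its two sub-segments); replacing $c_k^{(l)}$ by any of these changes $(c_k^{(l+1)})^m$ by an amount of order $m (c_k^{(l)})^{m-1}((1-\alpha)/2)^l$. The task is then to show that, by distributing these local moves across the $r_{\alpha,m}$ coordinates, one may bring the new sum within $\delta_{l+1}$ of $x$, with $\delta_l$ decaying geometrically; the fact that $c_k^{(l+1)}$ is chosen inside the level-$l$ segment of $c_k^{(l)}$ ensures each $(c_k^{(l)})_l$ is Cauchy.

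The main obstacle is the quantitative verification at the inductive step, which is precisely where $s_{\alpha,m}$ and $k_{\alpha,m}$ enter. The factor $s_{\alpha,m} = \lceil 1/\beta^{m-1} \rceil$ is calibrated so that the worst-case derivative factor $c^{m-1}$ in $(x^m)'$ is compensated: the ``menu'' of achievable sub-segment adjustments then covers an interval of adequate width to straddle the target. The additional term $k_{\alpha,m}$, which is nonzero exactly in the regime $\alpha > 1/3$ (where two copies of $C_\alpha$ cease to sum to an interval by Newhouse's gap lemma), accounts for the extra coordinates needed to span the middle-gap in the image under $x \mapsto x^m$ of each segment. I expect the core computation to reduce to showing that the aggregate adjustment budget from $r_{\alpha,m}$ coordinates strictly exceeds the worst-case image-gap width $\alpha \bigl((1-\alpha)/2\bigr)^l \cdot m c^{m-1}$ at every scale and every admissible location of the $c_k^{(l)}$'s, after which the geometric decay of $\delta_l$ and the Cauchy-limit argument close the proof.
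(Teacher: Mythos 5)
Your overall strategy---approximating $x$ by sums of $m$-th powers of endpoints of the level-$l$ segments, keeping each coordinate sequence Cauchy, and passing to the limit inside $C_\alpha$---is the same dynamical scheme the paper uses. But the proposal stops exactly where the actual proof begins: the quantitative inductive step is only announced (``the task is then to show\dots'', ``I expect the core computation to reduce to\dots'') and never carried out, and that step is the entire content of the lemma. The paper's proof consists precisely of this verification: it shows that after one coordinate is moved to the left endpoint of the gap containing an auxiliary point $x_0$, the error satisfies $0\le\Delta_2\le m\alpha\left(\frac{1-\alpha}{2}\right)^{l_2}$, and then, by checking the two extreme cases $\Delta=m\left(\frac{1-\alpha}{2}\right)^{n+1}$ and $\Delta=m\left(\frac{1-\alpha}{2}\right)^{n}$, that shifting at most $s_{\alpha,m}$ coordinates by $\left(\frac{1+\alpha}{2}\right)\left(\frac{1-\alpha}{2}\right)^{n}$ keeps $|\Delta|\le m\left(\frac{1-\alpha}{2}\right)^{n+1}$ when $\alpha\le 1/3$, while for $\alpha>1/3$ (where $\alpha>\frac{1-\alpha}{2}$, so a single shift can overshoot) one must mix in ``small'' shifts of size $\left(\frac{1+\alpha}{2}\right)\left(\frac{1-\alpha}{2}\right)^{l+1}$, and the defining inequality of $k_{\alpha,m}$ together with $\left(\frac{1+\alpha}{2}\right)<2\left(\frac{1-\alpha}{2}\right)$ failing exactly for $\alpha>1/3$ is what makes the count $r_{\alpha,m}=2s_{\alpha,m}+2k_{\alpha,m}$ suffice, with the extra $k_{\alpha,m}$ coordinates reserved on each side because of the symmetry of the scheme. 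None of this bookkeeping appears in your write-up, so the lemma is not proved; your description of $s_{\alpha,m}$ and $k_{\alpha,m}$ is a plausible heuristic, not a verification.

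A secondary point: your initialization is coarser than the paper's and makes the missing step strictly harder. You place the first approximation only within $(1-\beta^m)/2$ of $x$ (a constant-size error, writing $\beta=\frac{1+\alpha}{2}$ as you do), and would then have to bridge from this top scale down through every finer scale using only $r_{\alpha,m}$ coordinates, each of which may move only by geometrically summable amounts; that budget question is not addressed by your calibration of $s_{\alpha,m}$ and $k_{\alpha,m}$, which concerns corrections between consecutive scales. The paper avoids the issue entirely: it initializes half the coordinates at $\beta$ and half at $1$ (so the initial sum is the midpoint of $I$), uses the endpoints of $I$ to produce the unique $x_0\in[\beta,1]$ with $x=\sum_j\bigl(a_1^{(j)}\bigr)^m+x_0^m+\sum_{j\ge 2}\bigl(b_1^{(j)}\bigr)^m$, and moves a single coordinate to the left endpoint of the gap containing $x_0$, so the error starts out nonnegative and already at the fine scale $m\alpha\left(\frac{1-\alpha}{2}\right)^{l_2}$ of that gap. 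If you keep your coarse initialization you need an additional argument at the coarse scales; if you adopt the $x_0$ device you should say explicitly how the interval $I$ guarantees $x_0\in[\beta,1]$, which is the only place the stated endpoints of $I$ enter.
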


We note that for the case of $\alpha<\frac{1}{3}$, for any integer $m\geq 1$, we get with only two summands a set which is a disjoint union of intervals\footnote{In fact this would be true for any $c^2$ image of Cantor sets instead of the special case of images under the map $f(x)=x^m$}. In our theorem, for a set with a much larger number of terms, in the next theorem, we can get open intervals whose lengths are exponential in $m$.

Also consider the expression, 

\begin{align}\label{eq:ineq}
    2+\Big( \frac{1+\alpha}{2} \Big)^{m}-3\Big(\frac{1+\alpha}{2}\Big)^{m}\Big(\frac{3-\alpha}{2}\Big)^{m}.
\end{align}

When $\alpha=1$ the above quantity is zero. Further for any fixed positive integer value of $m$, one can numerically verify the existence of $0<\alpha(m)<1$ for which the above expression is $0$, and the expression changes sign when passing between the two cases of  $\alpha\leq \alpha(m)$ and $\alpha>\alpha(m)$ .
\bigskip

With an extension of the argument used to prove \cref{thm:thm2}, we also show,

\begin{theorem}
   For $\alpha\geq \alpha(m)$, there is an interval $I'$ of length,
\begin{align}
    2 + \Big(\frac{r_{\alpha,m}}{2}-3 \Big)\Big( \frac{1+\alpha}{2}\Big)^{m}\Big( \frac{3-\alpha}{2}\Big)^{m}-\Big(\frac{r_{\alpha,m}}{2}-1\Big)\Big(\frac{1+\alpha}{2}\Big)^{m},
\end{align}
with $I'\subset \Gamma_{\alpha, m}$.

For $\alpha <\alpha(m)$, we have $I'\subset \Gamma_{\alpha,m}$, a disjoint union of intervals of total length,
\begin{align}
    2r_{\alpha,m}\Big(  1- \Big(\frac{1+\alpha}{2}\Big)^{m}\Big( \frac{3-\alpha}{2}\Big)^{m} \Big).
\end{align}

Further for any $0<\alpha<1$, we have a single interval $I''\subset \Gamma_{\alpha,m}$, which is disjoint from $I'$, such that $I''$ has length 

\begin{align}
    \Big( \frac{r_{\alpha,m}}{2}+1 \Big)\Big( \frac{3+\alpha^{2}}{4} \Big)^{m} -2\Big( \frac{1+\alpha}{2} \Big)^{m} +\Big( \frac{r_{\alpha,m}}{2}-1 \Big).
\end{align}
\end{theorem}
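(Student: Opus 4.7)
The plan is to iterate the dynamical approximation scheme used in \cref{thm:thm2} but starting from a wider collection of admissible first-level configurations, and to analyze when the resulting first-level ranges overlap into a single interval versus remain separated. Recall that in the proof of \cref{thm:thm2}, the initial assignment placed $(r_{\alpha,m}/2)+1$ of the $c_k$ in the left level-$1$ subsegment $[0,(1-\alpha)/2]$ and $(r_{\alpha,m}/2)-1$ in the right subsegment $[(1+\alpha)/2,1]$, and the subsequent level-by-level refinement filled in the interval $I$. The present theorem asks what happens when the initial assignment is varied.

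First, I would parameterize the new admissible initial configurations by the number of additional transfers of summands from the right subsegment to the left. Each transfer shifts the attainable first-order $m$-th power sum downward, producing a new first-order range offset from $I$. A computation mirroring the one used to identify $I$ in \cref{thm:thm2} shows that each such range has essentially the same width as $I$, while the spacing between consecutive ranges is controlled by $\left(\frac{1+\alpha}{2}\right)^m\left(\frac{3-\alpha}{2}\right)^m$. The widths dominate the spacings precisely when the quantity $2+\left(\frac{1+\alpha}{2}\right)^m-3\left(\frac{1+\alpha}{2}\right)^m\left(\frac{3-\alpha}{2}\right)^m$ is nonnegative, i.e.\ when $\alpha\geq \alpha_1(m)$; in this regime the ranges merge into the single interval $I'$ and summation across admissible transfers produces the stated length. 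When $\alpha<\alpha_1(m)$ the ranges remain pairwise disjoint and their total length is obtained by summing the individual widths, giving the disjoint-union length claimed.

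For the additional interval $I''$ disjoint from $I'$, I would use a qualitatively different initial configuration tied to a level-$2$ endpoint of $C_\alpha$, namely the endpoint $(3+\alpha^2)/4$ that lies at the left edge of the second level-$2$ subsegment inside $[(1+\alpha)/2,1]$. Placing $(r_{\alpha,m}/2)+1$ summands near this endpoint and the remaining summands near $0$ yields a first-order sum centered around $\left(\frac{r_{\alpha,m}}{2}+1\right)\left(\frac{3+\alpha^2}{4}\right)^m$, well above $I'$, and applying the same level-by-level refinement as in \cref{thm:thm2} covers an interval of the stated length around this center. Disjointness from $I'$ follows by directly comparing the centers of the two configurations.

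The main obstacle will be the precise bookkeeping of how level-$l$ perturbations of the $c_k$ propagate through the nonlinear map $c\mapsto c^m$: a mean value argument introduces derivative factors at each step, and one must verify uniformly across all modified initial configurations that the approximating sequences remain Cauchy with the correct geometric rates. Once this uniform control is in place, the endpoint formulas claimed for $I'$ and $I''$ reduce to algebraic manipulations summing first-order contributions across the admissible configurations, and the threshold behavior at $\alpha_1(m)$ is exactly the sign change of the gap-versus-width expression identified in the second paragraph.
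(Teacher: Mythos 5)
Your high-level plan — rerun the dynamics of \cref{thm:thm2} from a one-parameter family of modified initializations and decide when the resulting first-order ranges overlap — is indeed the strategy of the paper, but the configurations you propose are not the ones that produce the stated lengths, and your recollection of the \cref{thm:thm2} setup is wrong in a way that matters. In that proof \emph{all} $r_{\alpha,m}$ points lie in the right level-one segment $[\frac{1+\alpha}{2},1]$: half are initialized at its left endpoint $\frac{1+\alpha}{2}$ and half at $1$; nothing is ever placed in $[0,\frac{1-\alpha}{2}]$. Consequently, ``transferring summands from the right subsegment to the left'' changes the sum by roughly $\big(\frac{1+\alpha}{2}\big)^m$ per transfer (a point in the left piece contributes at most $\big(\frac{1-\alpha}{2}\big)^m$), so the spacing of your family is not governed by $\big(\frac{1+\alpha}{2}\big)^m\big(\frac{3-\alpha}{2}\big)^m$ and does not reproduce the threshold $\alpha_1(m)$; you have pattern-matched the inequality $2+\big(\frac{1+\alpha}{2}\big)^m\geq 3\big(\frac{1+\alpha}{2}\big)^m\big(\frac{3-\alpha}{2}\big)^m$ onto a mechanism that does not generate it. What the paper actually does is keep all $b^{(j)}=1$ and move $t$ of the $a^{(j)}$ \emph{up}, from the level-one endpoint $\frac{1+\alpha}{2}$ to the level-two endpoint $\frac{(1+\alpha)(3-\alpha)}{4}$ inside the same right segment, producing intervals $I^{(t)}$ of width $2\big(1-\big(\frac{(1+\alpha)(3-\alpha)}{4}\big)^m\big)$ shifted successively by $\big(\frac{(1+\alpha)(3-\alpha)}{4}\big)^m-\big(\frac{1+\alpha}{2}\big)^m$; the overlap of consecutive $I^{(t)}$ is exactly the displayed inequality, and when they merge the length of $I'$ is the span between the extreme endpoints of $I^{(1)}$ and $I^{(r_{\alpha,m}/2)}$, not a ``summation across admissible transfers.''

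The treatment of $I''$ has a more serious gap. You derive it from a \emph{single} configuration ($(\frac{r_{\alpha,m}}{2}+1)$ summands near $\frac{3+\alpha^2}{4}$, the rest ``near $0$''), claimed to sit well above $I'$. A single run of the refinement covers an interval of bounded length (of order $2$ at most), whereas the claimed $|I''|$ grows linearly in $r_{\alpha,m}$, i.e.\ exponentially in $m$; so no single configuration can give it. The paper uses a second one-parameter family: all $a^{(j)}=\frac{1+\alpha}{2}$, with $t$ of the $b^{(j)}$ lowered from $1$ to $\frac{3+\alpha^2}{4}$ (which is the \emph{right} endpoint of the first level-two piece of $[\frac{1+\alpha}{2},1]$, not the left edge of the second), giving intervals $I_{(t)}$ whose consecutive overlap reduces to the always-valid inequality $1+\big(\frac{3+\alpha^2}{4}\big)^m\geq 2\big(\frac{1+\alpha}{2}\big)^m$, so $I''$ is a single interval for every $\alpha$ — a verification your proposal never addresses. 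Moreover these intervals lie \emph{below} $I$ and $I'$ (the $b$'s are decreased), so your disjointness argument ``by comparing centers,'' which places $I''$ above $I'$, starts from the wrong relative position; and putting summands near $0$ is incompatible with the refinement itself, which needs the $a^{(j)}$ pinned at $\frac{1+\alpha}{2}$ to supply both the admissible shifts and the $\big(\frac{1+\alpha}{2}\big)^m$ terms appearing in the endpoint formulas. The final paragraph's appeal to mean-value bookkeeping is fine as far as it goes, but it is the part already done in \cref{thm:thm2}; the missing content here is precisely the correct choice of the two families of initializations and the two overlap computations.
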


As pointed out in Section 4 and easily verified, all of these intervals have lengths that are growing exponentially with $m$.

\bigskip

Further, we also consider in Section 5 sums of Cantor sets of different types. In this case, we construct the following main theorem. The requirement of $\alpha_1=\beta_1$ is for convenience, and such a requirement is removed later for the analogous question on the products of Cantor sets.

\begin{theorem}
    Consider any two different sets of sequences,
    \begin{align}
        \alpha_1\geq \alpha_2 \geq \dots \geq \alpha_n, \ \beta_1 \geq \beta_2 \geq \dots \beta_p, \text{with} \ \alpha_1 =\beta_1,
    \end{align}
    with integers $n,p, n_1,n_2,r_1,r_2$, that satisfy $n=n_1 +n_2, p=r_1 +r_2$, and which satisfies the following conditions:
    \begin{align}\label{ineqjm}
    \sum\limits_{j=1}^{r_1}  \Big( \frac{1+\beta_j}{2} \Big)^{m-1}\Big( \frac{1-\beta_j}{2} \Big)^{2}>\Big( \frac{3\beta_1 -1}{1+\beta_1}\Big), \ \sum\limits_{j=r_1 +1}^{r_2+r_1}  \Big( \frac{1+\beta_j}{2} \Big)^{m-1}\Big( \frac{1-\beta_j}{2} \Big)>1, 
\end{align} and, 
\begin{align}\label{ineqjm}
    \sum\limits_{j=1}^{n_1}  \Big( \frac{1+\alpha_j}{2} \Big)^{m-1}\Big( \frac{1-\alpha_j}{2} \Big)^{2}>\Big( \frac{3\alpha_1 -1}{1+\alpha_1}\Big), \ \sum\limits_{j=n_1 +1}^{n_2+n_1}  \Big( \frac{1+\alpha_j}{2} \Big)^{m-1}\Big( \frac{1-\alpha_j}{2} \Big)>1.
\end{align}

Then the interval given by:
\begin{align}
    \Bigg[ \sum\limits_{i=2}^{n}\Big(\frac{1+\alpha_i}{2}\Big)^{m}+p-1 +2\Big(\frac{1+\beta_1}{2}\Big)^{m}, \sum\limits_{i=2}^{n}\bigg(\frac{1+\alpha_i}{2}\bigg)^{m}+(p+1) \Bigg]
\end{align}
    is contained in the set, 

\begin{align}
    \Gamma = \Bigg\{ \sum\limits_{k=1}^{n} c_{i}^{m}+\sum\limits_{k=1}^{p} d_{i}^{m}: c_{i}\in C_{\alpha_i}, d_i\in C_{\beta_i} \Bigg\}
\end{align}    
 
\end{theorem}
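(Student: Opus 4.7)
The plan is to adapt the dynamical approximation scheme underlying \cref{thm:thm2} to the mixed-parameter setting. Fix $x$ in the stated interval. For each $i = 2, \dots, n$, tentatively place $c_i$ near the left endpoint $(1+\alpha_i)/2$ of the right first-level subinterval of $C_{\alpha_i}$, so that $c_i^m \approx ((1+\alpha_i)/2)^m$; this reduces the problem to approximating the residual
\[
y \;=\; x - \sum_{i=2}^{n}\Bigl(\tfrac{1+\alpha_i}{2}\Bigr)^{m}\ \in\ \Bigl[\,p - 1 + 2\bigl(\tfrac{1+\beta_1}{2}\bigr)^{m},\ p + 1\Bigr]
\]
by $c_1^m + \sum_{j=1}^p d_j^m$, using the remaining $p+1$ variable summands.

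For the first-level configuration of the variable summands, I would place $p - 1$ of the $d_j$'s near $1$ and place $c_1$ together with two of the $d_j$'s near $(1+\beta_1)/2$; the endpoints of the residual interval correspond exactly to these two extreme configurations. As one iteratively migrates two of the right-subinterval summands from $(1+\beta_1)/2$ toward $1$ through nested level-$l$ segments, the partial sum sweeps the residual interval. The second $\beta$-inequality
\[
\sum_{j=r_1+1}^{r_1+r_2} \Bigl(\tfrac{1+\beta_j}{2}\Bigr)^{m-1}\Bigl(\tfrac{1-\beta_j}{2}\Bigr) > 1
\]
ensures that at every level $l$, collectively moving these $r_2$ flexible $d_j$-summands across their level-$l$ middle gaps produces an increment (the derivative $m x^{m-1}$ times the gap-length scale $(1-\beta_j)/2$) that exceeds the error to be corrected. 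When $\beta_1 > 1/3$ the level-$1$ middle gap is too wide to be bridged by any single sweep and must be covered by subdividing; the first $\beta$-inequality then supplies the reservoir of level-$2$ fine adjustments whose thickness-type contribution $((1+\beta_j)/2)^{m-1}((1-\beta_j)/2)^2$ must collectively exceed $(3\beta_1-1)/(1+\beta_1)$. The two $\alpha$-inequalities play the symmetric role on the $c_i$ side, allowing the tentative placements $c_i \approx (1+\alpha_i)/2$ to be corrected at deeper levels so that the full partial sum matches $x$ to the required precision.

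Iterating this two-sided refinement level by level yields, for each summand, a nested sequence of level-$l$ segments and hence a Cauchy sequence in the appropriate Cantor set. The partial sums of $m$-th powers converge to $x$ by construction; continuity of $t \mapsto t^m$ and compactness of each Cantor set produce limit points $c_i \in C_{\alpha_i}$, $d_j \in C_{\beta_j}$ with $\sum c_i^m + \sum d_j^m = x$, placing $x$ in $\Gamma$.

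The main obstacle is the bookkeeping required when the Cantor sets carry different parameters: the geometric decay rate of the level-$l$ segments depends on $j$, and one cannot factor out a single common ratio as in \cref{thm:thm2}. At each level one must identify which summands are still free to be moved versus which have already been committed, and verify that the free increment budget, as measured by the two inequalities, always exceeds the error left over from the previous level. The hypothesis $\alpha_1 = \beta_1$ keeps the leading-order rates $(1-\alpha_1)/2$ and $(1-\beta_1)/2$ aligned between the two sequences so that a single level-by-level schedule governs both; this is precisely the alignment the theorem notes can be dropped in the analogous statement for products, where multiplicativity supplies the corresponding matching automatically.
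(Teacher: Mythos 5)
Your overall plan is the same as the paper's: initialize the $\alpha$-summands at $(1+\alpha_i)/2$ and the $\beta$-summands at $1$, absorb the first-order error with one distinguished summand ranging over $[(1+\beta_1)/2,1]$, and then correct level by level, with the four hypotheses serving as the ``large-shift'' and ``small-shift'' budget conditions analogous to $s_{\alpha,m}$ and $k_{\alpha,m}$ in \cref{thm:thm2}. But the step you explicitly defer as ``bookkeeping'' is in fact the main new content of this theorem, and as written your sketch gets it wrong. You assert that at level $l$ one moves the flexible $d_j$'s ``across their level-$l$ middle gaps'' and that the second inequality makes the total increment exceed the error. The error at stage $l$ lives at the scale $\big(\tfrac{1-\beta_1}{2}\big)^{l}$ of the top set $C_{\beta_1}=C_{\alpha_1}$, while the level-$l$ segments of $C_{\beta_j}$ have length $\big(\tfrac{1-\beta_j}{2}\big)^{l}$, which decays at a different rate when $\beta_j<\beta_1$; using those shifts directly either overshoots (an individual shift must stay below roughly the current error scale, or $|\Delta_k|$ stops decreasing) or, if taken too deep, no longer sums to enough. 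The paper resolves this with a calibration device you never supply: since $\big(\tfrac{1-\alpha}{2}\big)^{l}\big(\tfrac{1+\alpha}{2}\big)$ is decreasing in $\alpha$ for $l\ge 1$, one defines for each $j\ge 2$ the minimal depth $n_{\alpha_j,l}$ with $\big(\tfrac{1-\alpha_j}{2}\big)^{n_{\alpha_j,l}}\big(\tfrac{1+\alpha_j}{2}\big)\le \big(\tfrac{1-\alpha_1}{2}\big)^{l}\big(\tfrac{1+\alpha_1}{2}\big)$; minimality gives the matching lower bound $\big(\tfrac{1-\alpha_j}{2}\big)^{n_{\alpha_j,l}-1}\big(\tfrac{1+\alpha_j}{2}\big)\ge \big(\tfrac{1-\alpha_1}{2}\big)^{l}\big(\tfrac{1+\alpha_1}{2}\big)$, so each shift is individually below the error scale (no overshoot) while the collective lower bound on the decrement reduces, uniformly in $l$, to exactly the stated inequalities in $n_1,n_2$ and $r_1,r_2$. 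Without this (or an equivalent) mechanism, the claim that ``the free increment budget always exceeds the error left over from the previous level'' is not verified, and the proof is incomplete precisely where the mixed-parameter case differs from \cref{thm:thm2}.

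Two smaller inaccuracies: the left endpoint of the residual interval corresponds to $c_1$ and \emph{one} $d$-summand sitting at $(1+\beta_1)/2$ with the remaining $p-1$ at $1$ (your configuration with $c_1$ plus two $d_j$'s near $(1+\beta_1)/2$ lies below the stated left endpoint); and the mechanism is not a monotone ``sweep'' of summands from $(1+\beta_1)/2$ to $1$ but the paper's two-sided dynamics about the midpoint $S_1=\sum_{i=1}^{n}\big(\tfrac{1+\alpha_i}{2}\big)^m+p$, in which the $a$-summands only increase, the $b$-summands only decrease, and the distinguished $x_0\in[(1+\beta_1)/2,1]$ gives the first approximation. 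These are fixable, but the calibration gap above is substantive.
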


We do not pursue this problem here, but following the methods of Theorem 3, one can expand the interval obtained in Theorem 4 in the situation of sums of distinct Cantor sets.

We introduce in Section 6, the main definitions and the statement of the Theorem 5, showing the existence of an open interval in the product of sufficient number of Cantor sets of the same type.

\section{Sums of Cantor sets.}

In this section, we prove \cref{thm:thm2}. We state this theorem for central Cantor sets, with $m=1$ being the special case of sums of arbitrary central Cantor sets.

\noindent

We seek to represent any $x\in I$ as a sum
$$x = \sum_{j=1}^{r_{\alpha, m}} c_k^m.$$
With a bit of thought, one can note that this can be achieved in the following way:
we initialize $r_{\alpha,m}$ points on $C_{\alpha}$ and let $S_1$ be the sum of their $m^{th}$ powers. These points are initialized so that $S_1$ serves as an initial approximation of $x$. In the subsequent steps, we shift these chosen points one at a time in a controlled manner and study the difference $\Delta_k = |x-S_k|$. In particular, we show that at step $k$, with the right initialization one obtains that $|\Delta_k| \leq m\left(\frac{1-\alpha}{2}\right)^{l_k}$ where $\{l_k\}$ is an increasing sequence of positive integers. This tells us that $\Delta_k \to 0$ as $k\to \infty$. With this dynamics in mind, it helps to think of each of the initialized points in $C_\alpha$ as the first value in a sequence of points all of which are contained in $C_\alpha$. Since at every step the points chosen to be a part of this sum belong to $C_\alpha$, the limit of each of these sequences is also in $C_\alpha$. The sum of the $m^{th}$ powers of these limiting values is representation of $x$ we seek.


 

\noindent With all this in place, we are ready to prove \cref{thm:thm2}.
\begin{proof}[Proof of \cref{thm:thm2}] 
We will construct $r_{\alpha,m}$ many sequences, with $a_{k}^{(j)}$ for all $k\geq 0 \text{ and } j=1,2,\dots,r_{\alpha,m}/2$, and with $ b_{k}^{(j)}$ for all  $k\geq 0, \text{ and } j=1,2,\dots,r_{\alpha,m}/2$.

\noindent We initialize these sequences for $k=1$ as follows: Let $a_{1}^{(j)} = \left(\frac{1+\alpha}{2}\right), \forall j=1,2,\dots,r_{\alpha,m}/2$ and $b_{1}^{(j)}=1, \forall j=1,2,\dots,r_{\alpha,m}/2$.
As the elements $a^{(j)}_k, b^{(j)}_k$ are defined through the algorithm described in this proof, $S_k$ is defined as the sum below:
    \begin{align}
S_k=\sum\limits_{j=1}^{r_{\alpha,m}/2}\big((a_{k}^{(j)})^{m} +(b_{k}^{(j)})^{m}\big).
    \end{align}

\noindent With our initialization, we have 
    \begin{align}
        S_1=\sum\limits_{j=1}^{r_{\alpha,m}/2} ((a_{1}^{(j)})^{m}+(b_{1}^{(j)})^{m}=\frac{r_{\alpha,m}}{2}\left(\frac{1+\alpha}{2}\right)^{m}+\frac{r_{\alpha,m}}{2}.
    \end{align}
\noindent Note that $S_1$ is the midpoint of the interval $I$. Without loss of generality consider any $x\in I$ with $x\leq S_1$. For each $k$, we define the difference:
    \begin{align}
        \Delta_{k}=x-S_{k}.
    \end{align}
Since $x\leq S_1, x\in I$, it is not hard to see that there is a unique real number $x_0 \in [\frac{1+\alpha}{2},1]$ with the property that,
\begin{align}\label{impeq}
        x= \sum\limits_{j=1}^{r_{\alpha,m}/2} \big(a_{1}^{(j)}\big)^{m} +x_{0}^{m} +\sum\limits_{j=2}^{r_{\alpha,m}/2} \big(b_{1}^{(j))}\big)^{m}
\end{align}

\noindent If $x_0 \in C_{\alpha}$, we are done. Otherwise, $x_0$ falls in the interior of an open interval $U$ cut out when constructing the sets $C_l$ whose countable intersection constitutes the set $C_\alpha$. In this case, let the length of the specific open interval in which $x_0$ falls, be $\alpha\left(\frac{1-\alpha}{2}\right)^{l_2}$. We start the dynamics by shifting $b_{1}^{(1)}$ closer to $x_0$ as follows: set $a_{2}^{(j)}=\left(\frac{1+\alpha}{2}\right), \forall j=1,2,\dots, r_{\alpha,m}/2$, and $b_{2}^{(j)}=1, \forall j=2,\dots, r_{\alpha,m}/2$, while we take $b_{2}^{(1)}$ to be the left end point of the interval $U$. It is crucial to note that in this process, for future iterations, one can decrease $b^{(1)}_2$ further by an amount $\left(\frac{1+\alpha}{2}\right)\left(\frac{1-\alpha}{2}\right)^{q}$ with $q\geq l_2 +1$, by considering $b_{3}^{(1)}$ to be the left end point of the open interval of length $\alpha\left(\frac{1-\alpha}{2}\right)^{q}$ present in the $(q-1)^{th}$ iteration of $C_{\alpha}$ that is immediately to the left of $b^{(1)}_{2}$. As will be required later, we note that we can also decrease the values $b_{2}^{(j)}=1$ for $j=2,\dots,n$ by a similar amount.\footnote{And in fact for $j=2,\dots,n$ we have leverage to decrease by bigger amounts $((1-\alpha)/2)^{q}((1+\alpha)/2)$ with $q\geq 0$ , which we won't need.} It is also worth noting that as our dynamics progresses, the sequences are set up so that all the $a^{(j)}$s can only be increased, and the $b^{(j)}$s can only be decreased by controlled amounts similar to the ones outlined above; and this in turn controls the way in which $|\Delta_k|\to 0$ as $k\to \infty$. 

\noindent From the setup above, and \cref{impeq} we get that 
\begin{align}
        \Delta_2= x_{0}^{m}-(b_{2}^{(1)})^{m}.
\end{align}

 \noindent Here $\Delta_2$ is positive, and is bounded from above by
\begin{align}\label{eqimp1}
        \left(b_{2}^{(1)}+\alpha\left(\frac{1-\alpha}{2}\right)^{l_2}\right)^{m} -(b_{2}^{(1)})^{m}= \alpha\left(\frac{1-\alpha}{2}\right)^{l_2}\cdot A_m\leq m \alpha\left(\frac{1-\alpha}{2}\right)^{l_2},
\end{align}

\noindent where $A_m$ is a sum of $m$ separate terms each of which is bounded from above by $1$.\footnote{Here we use the elementary identity $(x^{m}-y^{m})=(x-y)(x^{m-1}+x^{m-2}y+\dots+xy^{m-2}+y^{m-1})$.}

    \bigskip

\bigskip

\noindent Now to improve the error, we shift one of the $a_2^{(j)}$s to the right. Note that if we increase exactly one of the $a^{(j)}$ sequences\footnote{The effect is analogous if we exactly decrease one of the $b^{(j)}$ sequences in some other later step}, and take any given $l\geq l_2$, and set $a_{3}^{(1)}=a_{2}^{(1)}+ \left(\frac{1+\alpha}{2}\right)\left(\frac{1-\alpha}{2}\right)^l$, while keeping the rest unchanged, then we have $$S_3-S_2 = \left(a_{2}^{(1)}+\left(\frac{1+\alpha}{2}\right)\left(\frac{1-\alpha}{2}\right)^l\right)^{m}-(a_{2}^{(1)})^{m}=\left(\frac{1+\alpha}{2}\right)\left(\frac{1-\alpha}{2}\right)^l\cdot B_m$$

\noindent where $B_m$ consists of $m$ terms each of which is bounded from above by $1$ and bounded from below by $\left(\frac{1+\alpha}{2}\right)^{m-1}$, and thus we have 
   \begin{align}\label{eqimp2}
        m\left(\frac{1+\alpha}{2}\right)^{m-1}\left(\frac{1+\alpha}{2}\right)\left(\frac{1-\alpha}{2}\right)^{l} \leq S_3 -S_2 \leq m\left(\frac{1+\alpha}{2}\right)\left(\frac{1-\alpha}{2}\right)^{l}
   \end{align}


\noindent At this point we are ready to begin the analysis of how these shifts improve the error. Here, we split into two cases, one for $\alpha \leq 1/3$, and one for $\alpha > 1/3$ corresponding to thinner Cantor sets. We have,
\begin{align}\label{obs}
        \alpha\gtrless \frac{1-\alpha}{2} \iff \alpha \gtrless \frac{1}{3}.
\end{align}

\noindent At the $k^{th}$ stage of the iteration, each of the sequences $a^{(j)}$ and $b^{(j)}$ are iterated so that $a_k^{(j)}$ and $b_k^{(j)}$ belong to $C_\alpha$, which in turn ensures that the sums $S_k$ converge to $x$. We illustrate below that this convergence is in a way where we are able to assert control over the differences $\Delta_k$ so that we have
\begin{align}
       |\Delta_k|=|x-S_k|\leq m\left(\frac{1-\alpha}{2}\right)^{l_{k}}, \ \ \ l_{k}\in \N \ \forall k, l_k \to \infty \ \text{as} \  k\to \infty.
\end{align}

\noindent Since all the elements $a_{k}^{(j)}, b_{k}^{(j)}$ for a fixed $j$ will belong to $C_\alpha$, in the limit as $k\to \infty$, these Cauchy sequences converge to elements $a_{\infty}^{(j)},b_{\infty}^{(j)}$ also in $C_{\alpha}$, and this gives our desired representation of $x$ as
$$x = \sum_{j=1}^{r_{\alpha, m}/2} (a_{\infty}^{(j)} +  b_{\infty}^{(j)})$$
    
\noindent We outline the argument for going from the $k=2$ to the $k=3$ step of the iteration and how in principle the dynamics repeats similarly from there on.

\subsection*{Case $\alpha \leq 1/3$ :}
\noindent In this case, from \cref{eqimp1} and \cref{obs} we get that the upper bound on $\Delta_2$ is given by:

    \begin{align}\label{eq10}
        \Delta_2\leq m\alpha\left(\frac{1-\alpha}{2}\right)^{l_2}\leq m\left(\frac{1-\alpha}{2}\right)^{l_2+1}.
    \end{align}

\noindent We will see that in this case, it will be enough to consider the $r_{\alpha,m}=2s_{\alpha,m}$ many sequences. We start by considering the unique positive integer $n_2\geq (l_2+1)$ so that:

    \begin{align}\label{eqimp3}
        m\left(\frac{1-\alpha}{2}\right)^{n_2+1}\leq \Delta_2< m\left(\frac{1-\alpha}{2}\right)^{n_2}
    \end{align}

\noindent  Given these bounds on $\Delta_2$, we want to show that increasing $a_2^{(j)}$ to $a_3^{(j)}$ enables $|\Delta_3| \leq m\left(\frac{1+\alpha}{2}\right)^{n_2+1}$. Upon showing this, we can find $n_3>n_2$ so that we have $$m\left(\frac{1-\alpha}{2}\right)^{n_3+1} \leq \Delta_3 \leq m\left(\frac{1-\alpha}{2}\right)^{n_3},$$ from where we proceed inductively. To this end, it is enough to verify the two following cases:

\subsubsection*{Case $\Delta_2 = m\left(\frac{1-\alpha}{2}\right)^{n_2+1}$ :}
  Recall that our shift sets $a^{(1)}_3=a^{(1)}_{2} + \left(\frac{1-\alpha}{2}\right)^{n_2}\left(\frac{1+\alpha}{2}\right)$, then from \cref{eqimp2} and \cref{eqimp3}, we obtain 
    \begin{equation}\label{eq12}
        \begin{split}
         \Delta_3 = \Delta_2 -(S_3 -S_2) &\geq m\left(\frac{1-\alpha}{2}\right)^{n_2 +1}-m\left(\frac{1-\alpha}{2}\right)^{n_2}\left(\frac{1+\alpha}{2}\right) \\
   &=m\left(\frac{1-\alpha}{2}\right)^{n_2}\left(\left(\frac{1-\alpha}{2}\right)-\left(\frac{1+\alpha}{2}\right)\right)\\
   &=-m\alpha\left(\frac{1-\alpha}{2}\right)^{n_2}\\ 
   &\geq -m\left(\frac{1-\alpha}{2}\right)^{n_2 +1}
        \end{split}
    \end{equation}
\noindent where we obtain the last inequality from \cref{obs}. On the other hand $\Delta_3 \leq m(\frac{1-\alpha}{2})^{n_2+1}$ is clear since $S_3-S_2 > 0$. Thus in this case, we have $$|\Delta_3| \leq m\left(\frac{1-\alpha}{2}\right)^{n_2+1}$$ as desired.

\subsubsection*{Case $\Delta_2=m\left(\frac{1-\alpha}{2}\right)^{n_2}$ :}
         Repeating the argument from the previous case, one obtains that $\Delta_3 \geq -m\left(\frac{1-\alpha}{2}\right)^{n_2+1}$. We would like to now bound $\Delta_3$ from above by $m\left(\frac{1-\alpha}{2}\right)^{n_2+1}$. To achieve this, we note from \cref{eqimp2} that shifting exactly one of the $a_{2}^{(j)}$ to $a_{3}^{(j)}$ yields $S_3-S_2 \geq m\left(\frac{1+\alpha}{2}\right)^{m}\left(\frac{1-\alpha}{2}\right)^{n_2}$. This however may not be enough to obtain a sufficient reduction in magnitude to $\Delta_2$, and so we seek to shift multiple $a_j^{(j)}$s by a similar amount. To this end, we note that upon shifting $$s_{\alpha, m} = \left\lceil \frac{1}{\left(\frac{1+\alpha}{2}\right)^{m-1}} \right\rceil$$ of the $a_{2}^{(j)}$ to $a_{3}^{(j)}$ for all $j=1,\dots, s_{\alpha,m}$ (if necessary) one obtains
        \begin{equation*}
            \begin{split}
            \Delta_3 = \Delta_2 - (S_3-S_2) &= m\left(\frac{1-\alpha}{2}\right)^{n_2} - s_{\alpha, m}m\left(\frac{1+\alpha}{2}\right)^{m}\left(\frac{1-\alpha}{2}\right)^{n_2} \\ 
            &\leq m\left(\frac{1-\alpha}{2}\right)^{n_2}\left(1-\left(\frac{1+\alpha}{2}\right)\right)\\ &= m\left(\frac{1-\alpha}{2}\right)^{n_2+1}
            \end{split}
        \end{equation*}

        

        
\noindent Which also gives us $|\Delta_3| \leq m\left(\frac{1-\alpha}{2}\right)^{n_2+1}$, and the dynamics proceed inductively from here on. Depending on the implied constants and the exact location of the sequences $a^{(j)}_{k}$s, we might of course need less than the $s_{\alpha,m}$ many terms as illustrated had above, but even in the worst case, it suffices to have these many terms. \\\\

\noindent Certainly in case we have $m\left(\frac{1-\alpha}{2}\right)^{n_2+1}<\Delta_2<m\left(\frac{1-\alpha}{2}\right)^{n_2}$, which is intermediate between these two extreme cases, it is clear that we can do with increasing anywhere between $1$ and $s_{\alpha,m}$ many of the sequences $a^{(j)}$, and achieve a similar type of bound on $|\Delta_3|$. We have thus outlined exactly how the dynamical system runs in the case of going from the $k=2$ to the $k=3$, which completes our study of the case $\alpha \leq 1/3$.

\subsection*{Case $\alpha>1/3$ :}\footnote{   For the particular case of $m=1$, this constitutes the most general interesting case of arbitrary thin central Cantor sets.} In this case, we need one further modification in order to run the above dynamical system: we once again get an upper bound on $\Delta_2$ of the form
   \begin{align}\label{eqimp5}
        \Delta_2 \leq m\alpha\left(\frac{1-\alpha}{2}\right)^{l_2},
    \end{align}
    as in \cref{eqimp1}. Without loss of generality, consider that,
\begin{align}\label{eqimp5}
        m\left(\frac{1-\alpha}{2}\right)^{l_2+1}\leq \Delta_2 \leq m\alpha\left(\frac{1-\alpha}{2}\right)^{l_2},
    \end{align}

    Here we reduce the amount by which we shift our sequences, and consequently increase the number of terms that are being shifted, and $2s_{\alpha,m}$ many terms will no longer give us the desired reduction to the size of $|\Delta_k|$. 
    
    In doing this, we include a mix of `large' and 'small' shifts. We consider the number $k_{\alpha,m}$ which is the least positive integer such that 
    \begin{align}\label{eqmn}
            k_{\alpha,m}m\left(\frac{1+\alpha}{2}\right)^{m-1}\left(\frac{1-\alpha}{2}\right)^{l_2 +1} \left(\frac{1+\alpha}{2}\right) > m\alpha\left(\frac{1-\alpha}{2}\right)^{l_2} -m\left(\frac{1-\alpha}{2}\right)^{l_{2} +1}
    \end{align}
    The left hand side of the above inequality uses the lower bound of \cref{eqimp2} to quantify the lower bound on the total change to $S_2$ upon shifting $k_{\alpha, m}$ many sequences by an amount $(\frac{1-\alpha}{2})^{l_2+1}(\frac{1+\alpha}{2})$.
    
    Note that the shift is not by the amount $(\frac{1-\alpha}{2})^{l_2}(\frac{1+\alpha}{2})$ like it was in the case $\alpha \leq 1/3$, but by $(\frac{1-\alpha}{2})^{l_2+1}(\frac{1+\alpha}{2})$, and thus these $k_{\alpha,m}$ many terms correspond to 'small' shifts.
    
    Also note that,
    \begin{align}\label{eqnn}
            m\left(\frac{1-\alpha}{2}\right)^{l_2 +1}\left(\frac{1+\alpha}{2}\right)< 2m\left(\frac{1-\alpha}{2}\right)^{l_2 +1}
        \end{align}

\noindent The above inequality ensures that in the extreme case when $\Delta_2 = m\left(\frac{1-\alpha}{2}\right)^{l_2 +1}$, by increasing exactly one of the $a^{(j)}$'s for some $j\in\{ s_{\alpha,m} +1,\dots, (s_{\alpha,m}+k_{\alpha,m})\}$, by an amount of $(\frac{1+\alpha}{2})(\frac{1-\alpha}{2})^{l_2 +1}$, the decrement to $\Delta_2$ is such that $\Delta_3$ is bounded from below by $-m\left( \frac{1-\alpha}{2}\right)^{l_2 +1}$\footnote{Note that above we cannot increase the particular $a^{(j)}$ by an amount of $((1+\alpha)/2)((1-\alpha)/2)^{l_2}$ or some exponent $n\leq l_2$ under the assumption that $\alpha>1/3$. In the worst case with an increment to $a^{(j)}$ by an amount of $((1+\alpha)/2)((1-\alpha)/2)^{l_2}$, the above inequality \cref{eqnn}  reduces to $(\frac{1+\alpha}{2})<2(\frac{1-\alpha}{2}), \Leftrightarrow \alpha<1/3$.}.

        Here, \cref{eqmn} ensures that in the other extreme case when $\Delta_2 = m\Big(\frac{1-\alpha}{2}\Big)^{l_2}\alpha$, by considering up-to all of the $k_{\alpha,m}$ many $a^{(j)}$ sequences and increasing them by an amount of $(\frac{1+\alpha}{2})(\frac{1-\alpha}{2})^{l_2 +1}$, in the worst case $\Delta_3$ is bounded from above by $m\Big( \frac{1-\alpha}{2}\Big)^{l_2 +1}$.



  \bigskip

    Henceforth, it is enough to verify the two following cases for any $l_3\geq l_2$:

    \begin{itemize}
        \item $\Delta_3=m\Big( \frac{1-\alpha}{2} \Big)^{l_{3}+2}$: in this case, if we increase exactly one of the sequences $a^{(j)}$ by an amount $(\frac{1-\alpha}{2})^{l_3 +1}(\frac{1+\alpha}{2})$, for some fixed $1\leq j\leq s_{\alpha,m}$, then again we would conclude that 
        \begin{align}
            \Delta_4 \geq m\Big( \frac{1-\alpha}{2} \Big)^{l_3 +2} -m\Big( \frac{1-\alpha}{2} \Big)^{l_3 +1} \Big( \frac{1+\alpha}{2}\Big)= -m\Big(\frac{1-\alpha}{2}\Big)^{l_3 +1}\alpha.
        \end{align}

        In this case, $-m\Big(\frac{1-\alpha}{2}\Big)^{l_3 +1}\alpha\leq -m\Big(\frac{1-\alpha}{2}\Big)^{l_3 +2}$, and now we again employ the argument following \cref{eqimp5}. With $k_{\alpha,m}$ many sequences $b^{(j)}$ with $b^{(j)}_{1}=1$, and which are decreased in the usual manner, for each $j \in \{s_{\alpha,m}+k_{\alpha,m}+1,\dots, s_{\alpha,m}+2k_{\alpha,m}\}$ and this would make $\Delta_4\geq -m\Big( \frac{1-\alpha}{2} \Big)^{l_3 +2}$.  Because of the symmetry of the dynamical system, we have the contingency of needing to use, in the next step, the earlier sequences for each $j\in \{s_{\alpha,m}+1,s_{\alpha,m}+k_{\alpha,m} \}$. 
        
        Note that if we increase exactly one of the $a^{(j)}$ sequences by the smaller amount of $(\frac{1-\alpha}{2})^{l_3 +2}(\frac{1+\alpha}{2})$ then we would not need the next iteration here as stated above with the further $2k_{\alpha,m}$ many terms, but then in the following step the number of terms needed changes to $\tilde{s}_{\alpha,m}=\lceil \frac{1}{((1+\alpha)/2)^{m-1} ((1-\alpha)/2)} \rceil$. In this case, it would be enough to need $2\tilde{s}_{\alpha,m}$ many terms. But this is greater than $2s_{\alpha,m}+2k_{\alpha,m}$ when $\alpha\geq 1/3$ and our number of terms is optimal. 

        \item $\Delta_3=m\Big( \frac{1-\alpha}{2} \Big)^{l_{3}+1}$: we increase up to $s_{\alpha,m}$ many of the corresponding $a^{(j)}$ and for each of them  we get a decrement to $\Delta_3$ of an amount whose magnitude is lower bounded by the left hand bound of Equation 7, which is $m(\frac{1+\alpha}{2})^{m-1}\Big((\frac{1-\alpha}{2})^{l_3+1}(\frac{1+\alpha}{2})\Big)$, for $j$ in the range $\{1,2,\dots,s_{\alpha,m}\}$, and then we get: $-m\Big(\frac{1-\alpha}{2}\Big)^{l_{3}+1}\alpha<\Delta_4 \leq m\Big(\frac{1-\alpha}{2}\Big)^{l_{3}+2}$. Note the lower bound here is the same as in the previous case. Then the dynamical system iterates again with one further use of the argument following \cref{eqimp5} and a set of upto $k_{\alpha,m}$ many further terms.
    \end{itemize}

    For the intermediate case when $-m\Big( \frac{1-\alpha}{2} \Big)^{l_{3}+1}<\Delta_3<-m\Big( \frac{1-\alpha}{2} \Big)^{l_{3}+2}$, it would be enough to decrease at least one and at most $s_{\alpha,m}$ many of the sequences $b^{(j)}$ for $1\leq j\leq s_{\alpha,m}$, and then this argument would iterate again.
    
    This completes the proof.
\end{proof}

\bigskip

\section{Expanding the open interval for sums of Cantor sets.}

Here we prove Theorem 3. 

\begin{proof}Using essentially the same dynamical argument as in the previous section, one can get an open interval to be contained inside the set $\Gamma_{\alpha,m}$, which is exponentially large in $m$.

Consider an alteration of the argument of \cref{thm:thm2} whereby for each $1\leq t\leq r_{\alpha,m}/2$, we have a dynamical system where for all $0<j\leq t$, the $a^{(j)}_1$ values are initialized to $\Big(\frac{1+\alpha}{2}\Big) +\Big(\frac{1+\alpha}{2}\Big)\Big(\frac{1-\alpha}{2}\Big)=\Big(\frac{1+\alpha}{2}\Big)\Big(\frac{3-\alpha}{2}\Big)$ while the remaining $(\frac{r_{\alpha,m}}{2} - t)$ many values of $a^{(j)}_{1}$ with $t\leq j\leq \frac{r_{\alpha,m}}{2}$ are initialized at $\Big(\frac{1+\alpha}{2}\Big)$. 

Further, in this case all the $b^{(j)}_{1}$ values are exactly $1$. In this case, the initial sum is given by 
\begin{align*}
    S^{(t)}_{1}=\Big( \frac{r_{\alpha,m}}{2} -t\Big)\Big( \frac{1+\alpha}{2} \Big)^{m} +t\Big(\frac{1+\alpha}{2}\Big)^{m}\Big( \frac{3-\alpha}{2}\Big)^{m} +\frac{r_{\alpha,m}}{2}.
\end{align*}

In this situation, we can make the dynamics of \cref{thm:thm2} work for any real number in the interval: 
\begin{align}
I^{(t)}= \Big[ \Big(\frac{r_{\alpha,m}}{2}-t\Big)\Big(\frac{1+\alpha}{2}\Big)^{m} +(t+1)\Big( \frac{1+\alpha}{2} \Big)^{m}\Big(\frac{3-\alpha}{2}\Big)^{m} +\Big( \frac{r_{\alpha,m}}{2}-1 \Big), \\  \Big(\frac{r_{\alpha,m}}{2}-t\Big)\Big(\frac{1+\alpha}{2}\Big)^{m} +(t-1)\Big( \frac{1+\alpha}{2} \Big)^{m}\Big(\frac{3-\alpha}{2}\Big)^{m} +\Big( \frac{r_{\alpha,m}}{2}+1 \Big) \Big]
\end{align}

To see this, without loss of generality, we only concentrate on one half of the above interval, which is to consider any $x\in I^{(t)}$ with $x\leq S^{(t)}_{1}$. 

In this case, for this fixed $t$ and the initializations above, consider the unique real number $x_0\in [(\frac{1+\alpha}{2})(\frac{3-\alpha}{2}),1]$ such that for $x\in I^{(t)}$,
\begin{align}\label{eq:eqimp}
     x= \sum\limits_{j=1}^{r_{\alpha,m}/2} \big(a_{1}^{(j)}\big)^{m} +x_{0}^{m} +\sum\limits_{j=2}^{r_{\alpha,m}/2} \big(b_{1}^{(j))}\big)^{m}
\end{align}

After this, for each $t$ the dynamics proceeds similarly as in the proof of \cref{thm:thm2}; one could as well have initialized all the $a^{(j)}_{1}= \Big( \frac{1+\alpha}{2} \Big)\Big(\frac{3-\alpha}{2}\Big)$ for each $1\leq j\leq r_{\alpha,m}/2$ instead of initializing $(r_{\alpha,m}/2 -t)$ many of them to take values of $(\frac{1+\alpha}{2})$ as done above, in which case the dynamics would have been concentrated in the interval $\big[\Big( \frac{1+\alpha}{2} \Big)\Big(\frac{3-\alpha}{2}\Big),1\big]$ entirely. In this case, the maximum increment permissible for all the $a^{(j)}_{1}$ terms with $1\leq j\leq t$ is $(\frac{1-\alpha}{2})^{3}(\frac{1+\alpha}{2})$, while for $t<j\leq r_{\alpha}/2$ the maximum permissible initial increment of the $a^{(j)}$ terms in this dynamics is $(\frac{1-\alpha}{2})^{2}(\frac{1+\alpha}{2})$.

\bigskip

Thus for each $1\leq t\leq r_{\alpha,m}/2$ we get an open interval $I^{(t)}$ as written above.

\bigskip

Further, for any $1\leq t \leq r_{\alpha,m}/2$, we can also do the following: for each $1\leq j \leq r_{\alpha,m}/2$ we initialize $a^{(j)}_{1}=\frac{1+\alpha}{2}$, while for each $1\leq j\leq t$ we take $b^{(j)}_{1}=\Big( \frac{1+\alpha}{2}\Big) +\Big(\frac{1-\alpha}{2}\Big)\Big(\frac{1-\alpha}{2} \Big)=\frac{3+\alpha^{2}}{4}$ and for each $t\leq j\leq r_{\alpha,m}/2$ we take $b^{(j)}_1 =1$. 

In this case, the dynamics of \cref{thm:thm2} can be made to work as in the above case, and for each $1\leq t\leq r_{\alpha,m}$, the initial sum $S_{(t),1}$ is given by
\begin{align*}
    S_{(t),1}= \frac{r_{\alpha,m}}{2}\Big(\frac{1+\alpha}{2}\Big)^{m}+t\Big(\frac{3+\alpha^{2}}{4}\Big) +\Big(\frac{r_{\alpha,m}}{2} -t\Big)
\end{align*}

Thus with the identical procedure as earlier, we get the following interval:
\begin{align}
    I_{(t)}=\Big[ \Big(\frac{r_{\alpha,m}}{2}+1 \Big)\Big(\frac{1+\alpha}{2}\Big)^{m}+(t-1)\Big(\frac{3+\alpha^{2}}{4}\Big)^{m} +\Big(\frac{r_{\alpha,m}}{2} -t\Big) , \\ \Big(\frac{r_{\alpha,m}}{2}-1 \Big)\Big(\frac{1+\alpha}{2}\Big)^{m}+(t+1)\Big(\frac{3+\alpha^{2}}{4}\Big)^{m} +\Big(\frac{r_{\alpha,m}}{2} -t\Big) \Big]
\end{align}

In this case, for any $x\in I_{(t)}$ with $x\leq S_{(t),1}$, consider the unique $x_0\in \big[\frac{1+\alpha}{2},\frac{3+\alpha^2}{4}\big]$ so that \cref{eq:eqimp} holds, and then the dynamics can be made to run as in the earlier case. In this case, the decrement initially in the $b^{(j)}$ sequence (going from $b_{1}^{(j)}\to b_{2}^{(j)}$) is given by $(\frac{1-\alpha}{2})^{l}(\frac{1+\alpha}{2})$ for some $l\geq 3$, and the maximum decrement permissible for all the $b^{(j)}_{1}$ terms with $1\leq j\leq t$ is indeed $(\frac{1-\alpha}{2})^{3}(\frac{1+\alpha}{2})$ while for $t<j\leq r_{\alpha}/2$ the maximum permissible decrement of the $b^{(j)}$ terms in the dynamics is $(\frac{1-\alpha}{2})^{2}(\frac{1+\alpha}{2})$.

\bigskip
 
Following this, depending on the values of $\alpha$ and $m$ we are able to construct a set of measure exponentially larger than the one constructed in \cref{thm:thm2}.

Note the length of each of the intervals of the form $I^{(t)}$ is $2(1-(\frac{1+\alpha}{2})^{m}(\frac{3-\alpha}{2})^{m})$, while the length of each interval of the form $I_{(t)}$ is given by $2((\frac{3+\alpha^2}{4})^{m}- (\frac{1+\alpha}{2})^{m})$.

Consider the sets

$$I'=\bigcup\limits_{i=1}^{r_{\alpha,m}/2} I^{(t)}, I''= \bigcup\limits_{i=1}^{r_{\alpha,m}/2} I_{(t)} ;$$

depending on the values of $\alpha,m$, these could be single intervals or unions of $r_{\alpha,m}/2$ many disjoint intervals. 

For $I'$, we need to check if the left end point of $I^{(t+1)}$  is less than or equal to the right end point of $I^{(t)}$; this is true when 
\begin{align}\label{eq:ineq}
    2+\Big( \frac{1+\alpha}{2} \Big)^{m}\geq 3\Big(\frac{1+\alpha}{2}\Big)^{m}\Big(\frac{3-\alpha}{2}\Big)^{m},
\end{align}

in which case $I'$ is a single interval, and if the inequality above is reversed, then $I'$ is the disjoint union of $r_{\alpha,m}/2$ many intervals. 

There is at least one case of equality in \cref{eq:ineq} for some $\alpha(m) \in (0,1)$. The typical behavior of the two components of this inequality for large $m$ is illustrated below in figure 3.

\begin{figure}
     \centering
     \begin{subfigure}[b]{0.3\textwidth}
         \centering
         \includegraphics[width=\textwidth]{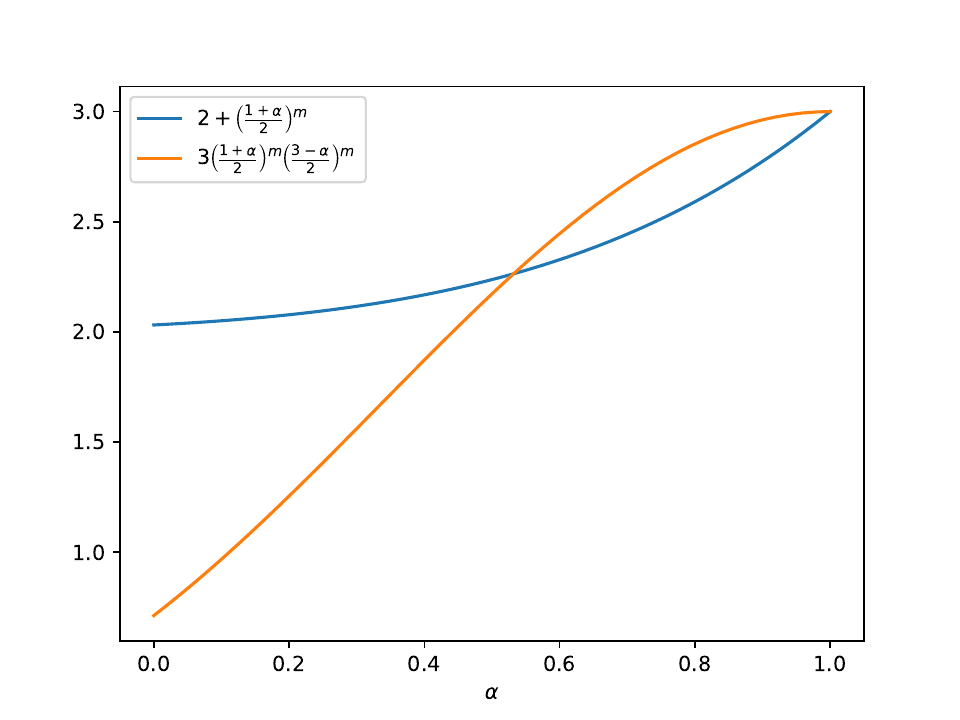}
         \caption{$m=5$}
         \label{figa}
     \end{subfigure}
     \hfill
     \begin{subfigure}[b]{0.3\textwidth}
         \centering
         \includegraphics[width=\textwidth]{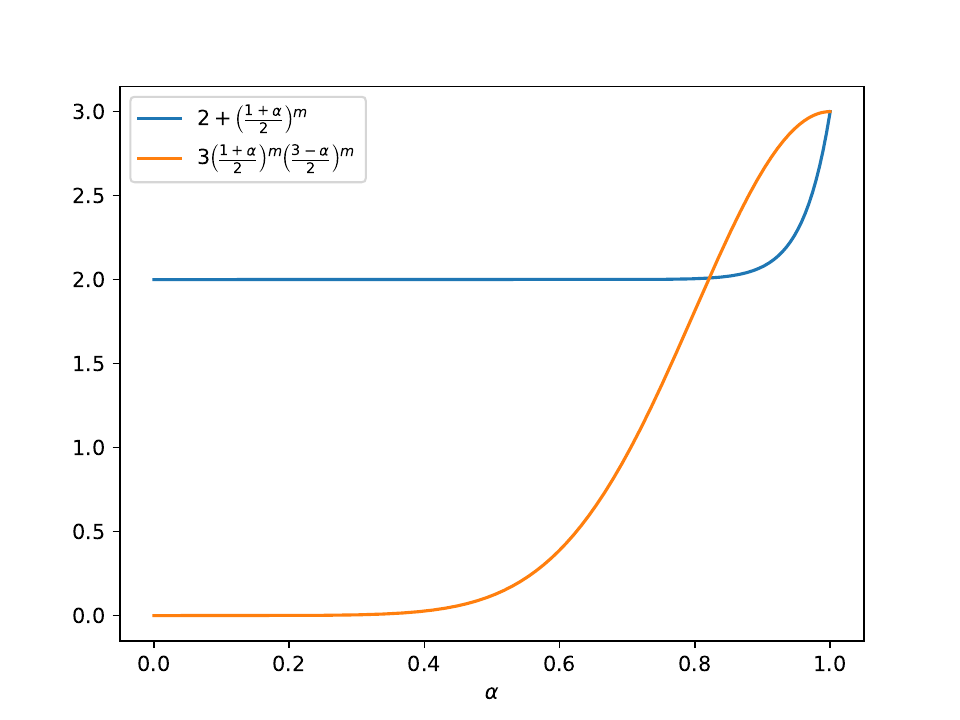}
         \caption{$m=50$}
         \label{figb}
     \end{subfigure}
     \hfill
     \begin{subfigure}[b]{0.3\textwidth}
         \centering
         \includegraphics[width=\textwidth]{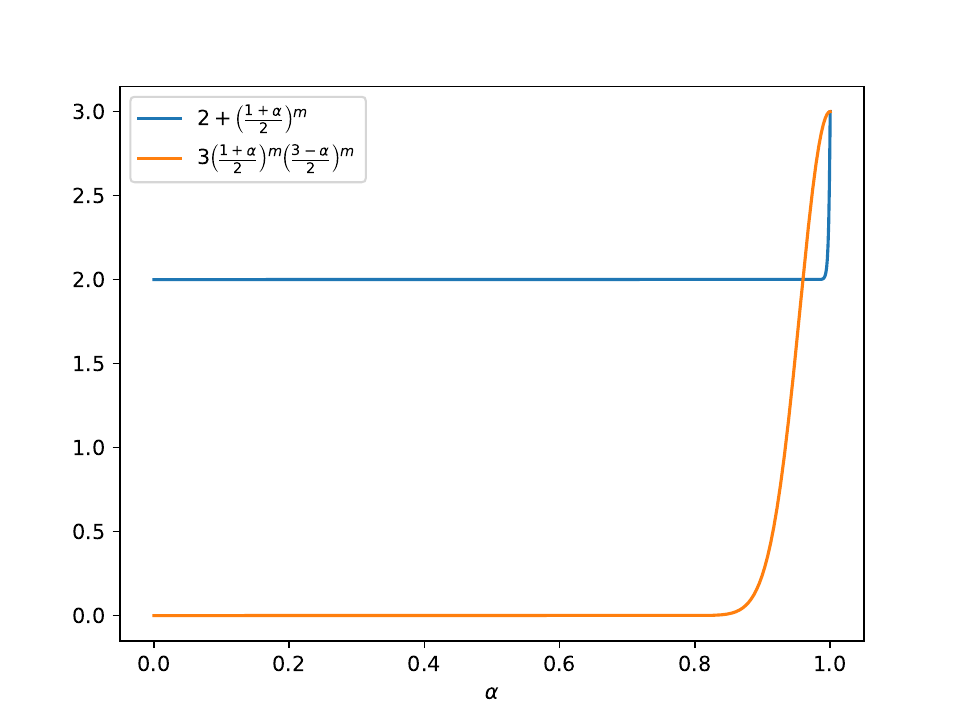}
         \caption{$m=1000$}
         \label{figc}
     \end{subfigure}
        \caption{Typical behavior of the components of inequality \ref{eq:ineq} for large $m$}
        \label{fig:three graphs}
\end{figure}

For $I''$, we need a similar check which reduces to showing: 
\begin{align}\label{eq:ineq2}
     1+ \Big( \frac{3+\alpha^{2}}{4}\Big)^{m}\geq 2\Big( \frac{1+\alpha}{2}\Big)^{m},
\end{align}

By noting that $(3+\alpha^2)\geq (1+\alpha)^2$ whenever $\alpha<1$, and Cauchy inequality, we see that the above inequality is always true.

As a consequence of this, we have that $I''$ is a always a single interval. For a given positive integer $m$, call the solution to the corresponding equality in \cref{eq:ineq} as $\alpha_1 (m)$. In this case, for $\alpha\geq \alpha(m)$, we have that $I'$ is a single interval of length 
\begin{align}
    2 + \Big(\frac{r_{\alpha,m}}{2}-3 \Big)\Big( \frac{1+\alpha}{2}\Big)^{m}\Big( \frac{3-\alpha}{2}\Big)^{m}-\Big(\frac{r_{\alpha,m}}{2}-1\Big)\Big(\frac{1+\alpha}{2}\Big)^{m}
\end{align}

Recalling the value of $r_{\alpha,m}$ we note that the above expression grows exponentially in $m$. 

On the other hand, when $\alpha<\alpha (m)$, we have a disjoint union of intervals, and the total length of the union of these intervals is:
\begin{align}
    2r_{\alpha,m}\Big(  1- \Big(\frac{1+\alpha}{2}\Big)^{m}\Big( \frac{3-\alpha}{2}\Big)^{m} \Big),
\end{align}

and this is also exponential in $m$, since $(1+\alpha)(3-\alpha)/4<1$ and thus the expression within the brackets above is bounded from below by a suitable constant independent of $m$ and $r_{\alpha,m}$ is exponential in $m$ for any $0<\alpha< 1$.

Further, for any value of $\alpha$, $I''$ is always a single interval of length 
\begin{align}
    \Big( \frac{r_{\alpha,m}}{2}+1 \Big)\Big( \frac{3+\alpha^{2}}{4} \Big)^{m} -2\Big( \frac{1+\alpha}{2} \Big)^{m} +\Big( \frac{r_{\alpha,m}}{2}-1 \Big),
\end{align}

which is exponential in $m$.
\end{proof}

\bigskip

\section{Sums of different Cantor sets.}
Here we prove Theorem 4.

\begin{proof}In this case, we consider two different sets of sequences 
\begin{equation}\label{ine}
    \alpha_1\geq \dots\geq\alpha_n, \ \
    \beta_1\geq \dots\geq\beta_p 
\end{equation}

for some integers $n$ and $p$ to be determined later, and central Cantor sets with these parameters. With the symmetry in the dynamics, for simplicity we require first that $\alpha_1=\beta_1$, although this is not necessary.

We consider the sequences $a^{(j)}_k, k\geq 0, \forall j=1,2,\dots,n$ with $a^{(j)}_1 = (1+\alpha_j)/2$ for all $j=1,2,\dots,n$ and then $b^{(j)}_k, k\geq 0, \forall j=1,2,\dots,p$ with $b^{(j)}_1 = 1$ for all $j=1,2,\dots,p$. 

As in Section 2, consider for each $k\geq 1$:
\begin{align}
    S_{k}=\sum\limits_{j=1}^{n} (a_{k}^{(j)})^{m}+\sum\limits_{j=1}^{p}(b_{k}^{(j)})^{m}
\end{align}

Further,
   \begin{align}
        S_1=\sum\limits_{j=1}^{n} (a_{1}^{(j)})^{m}+\sum\limits_{j=1}^{p}(b_{1}^{(j)})^{m}=\sum\limits_{i=1}^{n}\Big(\frac{1+\alpha_i}{2}\Big)^{m}+p.
    \end{align}

The interval we obtain in this case is given by 
\begin{align}
    \Bigg[ \sum\limits_{i=2}^{n}\Big(\frac{1+\alpha_i}{2}\Big)^{m}+p-1 +2\Big(\frac{1+\beta_1}{2}\Big)^{m}, \sum\limits_{i=2}^{n}\bigg(\frac{1+\alpha_i}{2}\bigg)^{m}+(p+1) \Bigg]
\end{align}
    
    Consider first the case where $x\in \tilde{I}$ and $x\leq S_1$. Consider as before the unique $x_0 \in [(1+\beta_1)/2,1]$ so that 
    \begin{align}
        x=\sum\limits_{j=1}^{n} (a_{1}^{(j)})^{m}+x_{0}^{m}+\sum\limits_{j=2}^{p}(b_{1}^{(j)})^{m}
    \end{align}

    Again, if $x_0\in C_{\beta_1}$ then we are done. Otherwise as before consider the left end point of the unique open interval in whose interior $x_0$ lies and take it to be the element $b_{2}^{(1)}$, while we take $a^{(j)}_{2}=a^{(j)}_1$ for all $j=1,\dots,n$ and $b^{(j)}_2=b^{(j)}_1$ for all $j=2,\dots,p$. 

    As before, we get for some integer $l_2$, that,
    \begin{align}
        \Delta_2 =x-S_2\leq m\Big(\frac{1-\beta_1}{2}\Big)^{l_2}\beta_1
    \end{align}

    We note with some basic calculus that the expression $\Big(\frac{1-\alpha}{2}\Big)^{l} \Big(\frac{1+\alpha}{2}\Big)$ is decreasing in $\alpha$ whenever $l\geq 1$. 
    Given the chain of inequalities in \cref{ine}, we thus have:
 \begin{align}\label{section6ref}
     \Big( \frac{1-\alpha_1}{2}\Big)^{l}\Big( \frac{1+\alpha_1}{2}\Big)\leq\dots\leq \Big( \frac{1-\alpha_n}{2}\Big)^{l}\Big( \frac{1+\alpha_n}{2}\Big), \ \  \Big( \frac{1-\beta_1}{2}\Big)^{l}\Big( \frac{1+\beta_1}{2}\Big)\leq\dots\leq \Big( \frac{1-\beta_p}{2}\Big)^{l}\Big(\frac{1+\beta_p}{2}\Big).
 \end{align}   

     In this case, consider that $\alpha_1=\beta_1\geq 1/3$.

For each positive integer $l$, each $\alpha_{j}$ for $j\geq 2$ and each $\beta_{j}$ for $j\geq 2$, we choose the minimum integers $n_{\alpha_{j},l},n_{\beta_{j},l}$ such that  
\begin{align}\label{sec4ineq}
\Big(\frac{1-\alpha_{j}}{2}\Big)^{n_{\alpha_{j},l}}\Big(\frac{1+\alpha_j}{2}\Big)  \leq \Big( \frac{1-\alpha_1}{2}\Big)^{l}\Big( \frac{1+\alpha_1}{2}\Big) , \Big(\frac{1-\beta_{j}}{2}\Big)^{n_{\beta_{j},l}}\Big(\frac{1+\beta_j}{2}\Big)  \leq \Big( \frac{1-\beta_1}{2}\Big)^{l}\Big( \frac{1+\beta_1}{2}\Big)
\end{align}


And thus, we get 
\begin{align}\label{eq6imp}
\begin{split}
 n_{\alpha_{j},l} =\Big\lceil \frac{1}{\log \Big(  \frac{1-\alpha_j}{2} \Big)}\Bigg(l\cdot\log\Big(\frac{1-\alpha_1}{2}\Big) +\log\Big(\frac{1+\alpha_1}{1+\alpha_j} \Big)\Bigg)\Big\rceil \\
  n_{\beta_{j},l} =\Big\lceil \frac{1}{\log \Big(  \frac{1-\beta_j}{2} \Big)}\Bigg(l\cdot\log\Big(\frac{1-\beta_1}{2}\Big) +\log\Big(\frac{1+\beta_1}{1+\beta_j} \Big)\Bigg)\Big\rceil 
  \end{split}
\end{align}

Note that the coefficient of $l$ on the right above is strictly greater than $1$ and as a result the sequence $n_{\alpha_{j},l}$ is strictly increasing in $l$. We also note that $n_{\alpha_1,l}=l$ for all positive integer $l$, and that for $j\geq 2$, we have $n_{\alpha_{j},l}$ is at least $2$.

Now we apply the techniques of Section 3 to this problem. 

We need the analogs of \cref{eqmn,eqnn} in the case. Here, \cref{eqnn} works just with $\alpha_1$. Further, for the analog of \cref{eqmn}, we need to ensure for the sequence of $\alpha_{j}$'s, for every positive integer $l$, that
\begin{align*}
    \sum\limits_{j=1}^{n_1}  m\Big( \frac{1+\alpha_j}{2} \Big)^{m-1}\Big( \frac{1-\alpha_j}{2} \Big)^{2}\Big( \frac{1-\alpha_j}{2} \Big)^{n_{\alpha_j,l}-1}\Big( \frac{1+\alpha_j}{2} \Big)>m\Big(\frac{1-\alpha_1}{2}\Big)^{l}\Big( \frac{3\alpha_1 -1}{2}\Big)
\end{align*}

For this, because of how $n_{\alpha_j ,l}$ is defined, it is enough to require that, 
\begin{align*}
    \sum\limits_{j=1}^{n_1}  \Big( \frac{1+\alpha_j}{2} \Big)^{m-1}\Big( \frac{1-\alpha_j}{2} \Big)^{2}\Big( \frac{1-\alpha_1}{2} \Big)^{l}\Big( \frac{1+\alpha_1}{2} \Big)>\Big(\frac{1-\alpha_1}{2}\Big)^{l}\Big( \frac{3\alpha_1 -1}{2}\Big),
\end{align*}

which is equivalent to,
\begin{align}\label{ineqjm}
    \sum\limits_{j=1}^{n_1}  \Big( \frac{1+\alpha_j}{2} \Big)^{m-1}\Big( \frac{1-\alpha_j}{2} \Big)^{2}>\Big( \frac{3\alpha_1 -1}{1+\alpha_1}\Big)
\end{align}

Further, for the smallest integer $n_1$ for which we have the above inequality, it is automatically ensured that we don't exceed: $m\big(\frac{1-\alpha}{2} \big)^{l}\alpha-\big(-m\big(\frac{1-\alpha_1}{2} \big)^{l+1}\big)=m\big(\frac{1-\alpha_1}{2} \big)^{l}\big(\frac{1+\alpha_1}{2}\big)$, because of \cref{sec4ineq}, \cref{eqnn} for $\alpha_1$, and because $\big(\frac{1+\alpha_j}{2}\big)^{m-1}<1$. 

Lastly, we also need the inequality corresponding to \cref{ineqs} in this case. Thus we need, for every positive integer $l$, that 
\begin{align}\label{ineqml}
    \sum\limits_{j=n_1 +1}^{n_1 +n_2}  m\Big( \frac{1+\alpha_j}{2} \Big)^{m-1}\Big( \frac{1-\alpha_j}{2} \Big)\Big( \frac{1-\alpha_j}{2} \Big)^{n_{\alpha_j,l}-1}\Big( \frac{1+\alpha_j}{2} \Big)>m\Big(\frac{1-\alpha_1}{2}\Big)^{l}\Big( \frac{1+\alpha_1}{2}\Big)
\end{align}

 
 \noindent Similar to the previous case, it is enough to require that
\begin{align}
    \sum\limits_{j=n_1 +1}^{n_2+n_1}  \Big( \frac{1+\alpha_j}{2} \Big)^{m-1}\Big( \frac{1-\alpha_j}{2} \Big)>1.
\end{align}

Further, we have $n_1 +n_2=n$ and also a similar set of constraints works for the $\beta$ parameters, with two parameters $r_1$, $r_2$ with $r_1 +r_2 =p$, and 
\begin{align}\label{ineqjm}
    \sum\limits_{j=1}^{r_1}  \Big( \frac{1+\beta_j}{2} \Big)^{m-1}\Big( \frac{1-\beta_j}{2} \Big)^{2}>\Big( \frac{3\beta_1 -1}{1+\beta_1}\Big), \ \sum\limits_{j=r_1 +1}^{r_2+r_1}  \Big( \frac{1+\beta_j}{2} \Big)^{m-1}\Big( \frac{1-\beta_j}{2} \Big)>1.
\end{align}
\end{proof}
\bigskip

We note that by switching the roles of the $\alpha_i$ and $\beta_i$ terms, and employing the same arguments and noting that $\alpha_1=\beta_1$, one can get the following interval as well:
\begin{align}
     \Bigg[ \sum\limits_{i=2}^{n}\Big(\frac{1+\beta_{i}}{2}\Big)^{m}+p-1 + 2\Big(\frac{1+\alpha_1}{2}\Big)^{m}, \sum\limits_{i=2}^{n}\big(\frac{1+\beta_{i}}{2}\big)^{m}+(p+1) \Bigg]
\end{align}

\section{Products of same Cantor set.}

Now consider the case of products of Cantor sets of the same type $C_\alpha$, in order to get an open interval, with this method. First consider the case where $\alpha\leq 1/3$.




Consider the $k_\alpha$'th stage of the construction of the Cantor set $C_\alpha$, in which case we have a closed interval $I_{k_\alpha}$ of length $(\frac{1-\alpha}{2})^{k_{\alpha}}\in C_{k_{\alpha}}$ whose right end point is $1$. We consider the $2t_\alpha$ many sequences $a^{(j)}, b^{(j)}$, with $j=1,\dots,t_\alpha$, which are initialized such that $a^{(j)}_1=\big(1-(\frac{1-\alpha}{2})^{k_\alpha}\big)$ are all at the left end point of $I_{k_\alpha}$, and $b^{(j)}_1=1$ for each $j=1,\dots, t_\alpha$. In this case, consider the set 
\begin{align*}
    \Gamma_\alpha =\Bigg\{\prod\limits_{k=1}^{2t_\alpha} c_k: c_k \in C_\alpha, k=\overline{1,2t_\alpha} \Bigg\},
\end{align*}

where we take, the value of $t_\alpha$ to be determined later, such that,
$$\begin{cases}
    t_\alpha=s_{\alpha}  &   \alpha\leq\frac{1}{3}\\

    t_\alpha=s_{\alpha} +p_\alpha  &  \alpha>\frac{1}{3}.
\end{cases}$$

Here, $s_\alpha$ and $p_\alpha$ are determined in the equations \cref{eq:eqimpp} and \cref{eqmnn1} below, which we state here.
\begin{align}\label{eq:eqprod}
    s_\alpha=\Bigg\lceil \frac{1}{\Bigg(1-\Big(\frac{1-\alpha}{2}\Big)^{k_\alpha}\Bigg)^{2t_\alpha -1}} \Bigg\rceil, \  p_{\alpha}=\begin{cases} \Bigg\lceil \frac{(\frac{3\alpha -1}{2})}{\Big( 1- \Big(\frac{1-\alpha}{2}\Big)^{k_\alpha} \Big)^{2t_\alpha -1}(\frac{1-\alpha^2}{4})} \Bigg\rceil & \alpha>\frac{1}{3} \\ 0 & \alpha\leq \frac{1}{3} \end{cases}
\end{align}

First for the case of $\alpha\leq \frac{1}{3}$, we have
\begin{align}\label{eq:ineq6}
    \frac{1}{\Bigg(1-\Big(\frac{1-\alpha}{2}\Big)^{k_\alpha}\Bigg)^{2t_\alpha -1}}  \leq t_\alpha<1+ \frac{1}{\Bigg(1-\Big(\frac{1-\alpha}{2}\Big)^{k_\alpha}\Bigg)^{2t_\alpha -1}}
\end{align}

In this case, we consider the number $a_0>1$ such that the graph of the function $f(t)=a_{0}^{2t-1}-(t-1)$ is tangent to the $t$ axis, or in other words, the graph of the equation $y=a_{0}^{2t-1}$ is tangent to the graph of the straight line $y=t-1$. In this case, the graph of $y=a_{0}^{2t-1}$ would also cross the straight line $y=t$ and further  for any function $a^{2t-1}-(t-1)$ with $a\leq a_0$ the graph of the function $a^{2t-1}$ would cross the graph of the straight line $y=t-1$. If we choose the minimum integer $k_{\alpha}$ such that:
\begin{align}
    \frac{1}{1-\Big( \frac{1-\alpha}{2} \Big)^{k_{\alpha}}}\leq a_0 \Leftrightarrow k_{\alpha}\geq \frac{\log (1- \frac{1}{a_0})}{\log (\frac{1-\alpha}{2})},
\end{align}

which means taking,
\begin{align}\label{eq45}
    k_\alpha=\Big\lceil \frac{\log (1- \frac{1}{a_0})}{\log (\frac{1-\alpha}{2})}\Big\rceil,
\end{align}

we can find a positive integer solution to \cref{eq:ineq6} in $t_\alpha$ as well. Henceforth we consider this minimum possible solution in \cref{eq45}.

One verifies with elementary calculus that the number $a_0$ above satisfies:
\begin{align}
    a_{0}^{2 + \frac{1}{\log a_0}} =\frac{1}{2\log a_0}\Leftrightarrow a_{0}^{2}\log a_0 =\frac{1}{2e}.
\end{align}

\bigskip
Next, for the case of $\alpha> \frac{1}{3}$, with a little arithmetic combining the two parts from \cref{eq:eqprod}, it follows that:
\begin{align}\label{eq:ineq7}
    \Big(\frac{6\alpha -1 -\alpha^{2}}{1-\alpha^2}\Big)  \frac{1}{\Bigg(1-\Big(\frac{1-\alpha}{2}\Big)^{k_\alpha}\Bigg)^{2t_\alpha -1}}\leq t_\alpha< 2+ \Big(\frac{6\alpha -1 -\alpha^{2}}{1-\alpha^2}\Big)\frac{1}{\Bigg(1-\Big(\frac{1-\alpha}{2}\Big)^{k_\alpha}\Bigg)^{2t_\alpha -1}}
\end{align}

We call: 
\begin{align}
\beta_{\alpha}=\frac{1-\alpha^2}{6\alpha -1 -\alpha^{2}}
\end{align}

In this case, the above reduces to:
\begin{align}
    \beta_\alpha (t_\alpha -2)< \frac{1}{\Big( 1-\big( \frac{1-\alpha}{2}\big)^{k_\alpha} \Big)^{2t_\alpha -1}}\leq \beta_\alpha t_\alpha.
\end{align}

Here again, consider the number $a_1$ such that the graph of the function $g_{\alpha}(t)= a_{1}^{2t-1} -\beta_{\alpha}(t-2)$ is tangent to the $t$ axis, or that the graph of the equation $y=a_{1}^{2t-1}$ is tangent to the graph of the straight line $y=\beta_{\alpha}(t-3)$. By similar arguments as before, we are guaranteed to have positive integer solution in $t_\alpha$ when $k_\alpha$ is chosen to be the smallest integer such that  
\begin{align}
    \frac{1}{1-\Big( \frac{1-\alpha}{2} \Big)^{k_{\alpha}}}\leq a_1 \Leftrightarrow k_{\alpha}\geq \frac{\log (1- \frac{1}{a_1})}{\log (\frac{1-\alpha}{2})},
\end{align}

and thus choosing 
\begin{align}
    k_{\alpha}=\Big\lceil \frac{\log (1- \frac{1}{a_1})}{\log (\frac{1-\alpha}{2})}\Big\rceil,
\end{align}

we can get at least one positive integer solution $t_\alpha$ to \cref{eq:ineq7} as well. Henceforth we choose the minimum possible solution.

Again with elementary calculus one verifies that the number $a_1$ satisfies:
\begin{align}
    a_1^{5+\frac{1}{\log a_1}}=\frac{\beta_{\alpha}}{2\log a_1}\Leftrightarrow a_{1}^{5}\log a_1=\frac{\beta_\alpha}{2e}
\end{align}

\bigskip

Once we get a positive integer solution $t_{\alpha}$ to \cref{eq:ineq7}, we can also find integers $s_\alpha, p_\alpha$ that satisfy \cref{eq:eqprod} with the condition that $t_\alpha=s_\alpha + p_\alpha$. From the geometry of the line $y=\beta_{\alpha}t$ and the graph of $y=a_{1}^{2t-1}$ one sees that the solution $t_\alpha$ is at least $1/\beta_{\alpha}$ which is the point where the line $y=\beta_{\alpha}t$ intersects the line $y=1$.

Also consider the interval,
\begin{align}
    I_\alpha=\Bigg[\Bigg(1-\Big(\frac{1-\alpha}{2}\Big)^{k_\alpha}\Bigg)^{t_\alpha+1}, \Bigg(1-\Big(\frac{1-\alpha}{2}\Big)^{k_\alpha}\Bigg)^{t_\alpha -1}\Bigg]
\end{align}

\begin{theorem}\label{thm:thm3}
    Given the central Cantor set $C_\alpha$ and $t_\alpha,k_\alpha, I_\alpha,\Gamma_\alpha$ as above, we have:
    
    $$I_\alpha \subset \Gamma_\alpha.$$ 
\end{theorem}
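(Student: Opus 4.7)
The plan is to run a multiplicative analogue of the dynamical scheme from the proof of \cref{thm:thm2}. I would initialize $2t_\alpha$ sequences $\{a_k^{(j)}\},\{b_k^{(j)}\}$ in $C_\alpha$ with $a_1^{(j)}=1-\left(\frac{1-\alpha}{2}\right)^{k_\alpha}$ (the left endpoint of the rightmost surviving level-$k_\alpha$ interval of $C_\alpha$) and $b_1^{(j)}=1$, and set $S_k=\prod_{j=1}^{t_\alpha}a_k^{(j)}\cdot\prod_{j=1}^{t_\alpha}b_k^{(j)}$. Then $S_1=\left(1-\left(\frac{1-\alpha}{2}\right)^{k_\alpha}\right)^{t_\alpha}$ is the geometric mean of the two endpoints of $I_\alpha$ and so lies in its interior. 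Fix $x\in I_\alpha$ with $x\leq S_1$ (the other case is symmetric) and define $x_0=x/\left(1-\left(\frac{1-\alpha}{2}\right)^{k_\alpha}\right)^{t_\alpha}$, which lies in $[1-\left(\frac{1-\alpha}{2}\right)^{k_\alpha},1]$; the goal is to produce Cauchy sequences whose limits in $C_\alpha$ multiply to $x$.

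If $x_0\in C_\alpha$ I set $b_\infty^{(1)}=x_0$ and stop; otherwise $x_0$ sits in an open gap of $C_\alpha$ of length $\alpha\left(\frac{1-\alpha}{2}\right)^{l_2}$ at some level $l_2\geq k_\alpha+1$, and I shift $b_2^{(1)}$ to the left endpoint of that gap while leaving every other initial value unchanged. Since the other $2t_\alpha-1$ factors still equal $1-\left(\frac{1-\alpha}{2}\right)^{k_\alpha}$ or $1$, the error $\Delta_2=x-S_2$ satisfies
\[
0\leq\Delta_2\leq\left(1-\left(\tfrac{1-\alpha}{2}\right)^{k_\alpha}\right)^{t_\alpha}\alpha\left(\tfrac{1-\alpha}{2}\right)^{l_2}.
\]
The critical quantitative input is the product analogue of \cref{eqimp2}: increasing a single $a^{(j)}$ by an admissible $\left(\frac{1+\alpha}{2}\right)\left(\frac{1-\alpha}{2}\right)^l$ changes $S_k$ by an amount in the window
\[
\left(1-\left(\tfrac{1-\alpha}{2}\right)^{k_\alpha}\right)^{2t_\alpha-1}\left(\tfrac{1+\alpha}{2}\right)\left(\tfrac{1-\alpha}{2}\right)^l\leq S_{k+1}-S_k\leq\left(\tfrac{1+\alpha}{2}\right)\left(\tfrac{1-\alpha}{2}\right)^l,
\]
since the remaining $2t_\alpha-1$ factors all stay in $[1-\left(\frac{1-\alpha}{2}\right)^{k_\alpha},1]$ throughout the dynamics (the $a^{(j)}$'s only move right, the $b^{(j)}$'s only move left). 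Thus $\left(1-\left(\frac{1-\alpha}{2}\right)^{k_\alpha}\right)^{2t_\alpha-1}$ plays exactly the role that $\left(\frac{1+\alpha}{2}\right)^{m-1}$ played in the sums-of-$m$th-powers setting, which is precisely why $s_\alpha$ in \cref{eq:eqprod} is its reciprocal ceiling.

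With this replacement, the case analysis of \cref{thm:thm2} would transfer verbatim. For $\alpha\leq 1/3$ one uses at most $s_\alpha$ coordinated shifts per iteration among the $a^{(j)}$'s (respectively $b^{(j)}$'s) to contract $|\Delta_k|$ by a factor of $\left(\frac{1-\alpha}{2}\right)$, and an induction modeled on the $\alpha\leq 1/3$ subcase of \cref{thm:thm2} yields $|\Delta_k|\leq \left(1-\left(\frac{1-\alpha}{2}\right)^{k_\alpha}\right)^{t_\alpha}\left(\frac{1-\alpha}{2}\right)^{n_k}$ with $n_k\to\infty$. For $\alpha>1/3$ the further ``small shift'' refinement introduces up to $p_\alpha$ additional sequences per iteration and mirrors the $\alpha>1/3$ subcase of \cref{thm:thm2} line for line, with $p_\alpha$ from \cref{eq:eqprod} arising as the direct multiplicative analogue of $k_{\alpha,m}$.

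The main obstacle is not the dynamical bookkeeping, which is largely mechanical once the window above is in hand, but ensuring that the coupled definition of $t_\alpha$, $s_\alpha$, $p_\alpha$ (with $t_\alpha=s_\alpha+p_\alpha$ and both $s_\alpha$, $p_\alpha$ depending on $t_\alpha$ through the factor $\left(1-\left(\frac{1-\alpha}{2}\right)^{k_\alpha}\right)^{2t_\alpha-1}$) actually admits a positive integer solution. This is precisely the role of the tangency conditions determining $a_0$ and $a_1$ together with the choice of $k_\alpha$ in \cref{eq45}: they guarantee that \cref{eq:ineq6} and \cref{eq:ineq7} have a positive integer solution by forcing the exponential in $t_\alpha$ on one side to dominate the linear term on the other. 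Once $k_\alpha,t_\alpha,s_\alpha,p_\alpha$ are fixed, the sequences $\{a_k^{(j)}\},\{b_k^{(j)}\}$ constructed above remain in the closed set $C_\alpha$ at every step and are Cauchy by the contracting bound on $|\Delta_k|$, so they converge to limits $a_\infty^{(j)},b_\infty^{(j)}\in C_\alpha$ with $\prod_j a_\infty^{(j)}b_\infty^{(j)}=x$, establishing $I_\alpha\subset\Gamma_\alpha$.
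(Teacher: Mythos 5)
Your proposal is correct and follows essentially the same route as the paper's own proof: the same initialization at the left endpoint of $I_{k_\alpha}$ and at $1$, the same reduction to $x_0\in[1-(\frac{1-\alpha}{2})^{k_\alpha},1]$, the same multiplicative error window with $\big(1-(\frac{1-\alpha}{2})^{k_\alpha}\big)^{2t_\alpha-1}$ replacing $\big(\frac{1+\alpha}{2}\big)^{m-1}$, and the same two-case contraction argument with $s_\alpha$ large shifts and $p_\alpha$ small shifts, relying on the tangency conditions and the choice of $k_\alpha$ to solve the self-referential definition of $t_\alpha$. No substantive differences to report.
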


\begin{proof}
    Without loss of generality, we only show that any $x\in \Bigg[\Bigg(1-\Big(\frac{1-\alpha}{2}\Big)^{k_\alpha}\Bigg)^{t_\alpha+1}, \Bigg(1-\Big(\frac{1-\alpha}{2}\Big)^{k_\alpha}\Bigg)^{t_\alpha}\Bigg]$ belongs to $\Gamma_\alpha$.

        Consider for each $j\geq 1$, the expression,
    \begin{align}
        P_j=\prod\limits_{k=1}^{t_\alpha}a^{(k)}_j b^{(k)}_j.
    \end{align}

    Also, define for each $j$, 

    \begin{align}
        \Delta_j =x-P_j.
    \end{align}

    In this case, consider the unique real number $x_0$ such that $x=a^{(1)}_1\dots a^{(t_\alpha)}_1 x_0 b^{(2)}_1\dots  b^{(t_\alpha)}_1$. If $x_0\in C_\alpha$ we are done, otherwise, we consider the unique open interval in the further construction of $C_\alpha$ restricted to $I_{k_\alpha}$, in which $x_0$ lies, and take $b^{(1)}_2$ to be the left end point of this interval, and keep $a^{(j)}_2 =a^{(j)}_1 \forall j=1,\dots,t_\alpha$, and $b^{(j)}_2 =b^{(j)}_1 \forall j=2,\dots,t_\alpha$. Let this open interval have a length $\alpha\big( \frac{1-\alpha}{2} \big)^{k_1}$, with clearly $k_1\geq k_\alpha$.

    In this case, we have 

    \begin{align}
        \Delta_2=x-P_2= (x_0 - b_{2}^{(1)})\prod\limits_{k=1}^{t_\alpha}a^{(k)}_2 \prod\limits_{k=2}^{t_\alpha} b^{(k)}_2 <\alpha\Big( \frac{1-\alpha}{2} \Big)^{k_1}\Bigg(1-\Big(\frac{1-\alpha}{2}\Big)^{k_\alpha}\Bigg)^{t_\alpha}\cdot 1\\ <\alpha\Big( \frac{1-\alpha}{2} \Big)^{k_1} \\ <\Big( \frac{1-\alpha}{2} \Big)^{k_1 +1},
    \end{align}

    in this case, since $\alpha\leq 1/3$, we have the last inequality. 

    Further consider the unique $k_2 \geq k_1 +1$ such that 

    \begin{align}
        \Big( \frac{1-\alpha}{2} \Big)^{k_2 +1}\leq \Delta_2 < \Big( \frac{1-\alpha}{2} \Big)^{k_2}
    \end{align}

    In this case, we would like to increase one or more of the $a^{(j)}$ sequences so that $\Delta_{2}$ decreases to a value of $\Delta_3$, with $|\Delta_3|\leq \big( \frac{1-\alpha}{2} \big)^{k_3}$ with $k_3> k_2$.

    If we increase just $a^{(1)}_{2}$ to $$a^{(1)}_{3}=a^{(1)}_{2} +\Big( \frac{1+\alpha}{2}\Big)\Big( \frac{1-\alpha}{2}\Big)^{k_2},$$

    and keep $a^{(j)}_3=a^{(j)}_2$ for all $j=2,\dots,m$, and $b^{(j)}_3=a^{(j)}_2$ for all $j=1,\dots,m$, we have,

    \begin{align}\label{eq62}
    \begin{split}
      \Bigg(1-\Big(\frac{1-\alpha}{2}\Big)^{k_\alpha}\Bigg)^{2t_\alpha -1} \Big( \frac{1+\alpha}{2}\Big)\Big( \frac{1-\alpha}{2}\Big)^{k_2} < P_3 -P_2= (a^{(1)}_3 -a^{(1)}_2)\prod\limits_{k=2}^{t_\alpha}a^{(k)}_2 \prod\limits_{k=1}^{t_\alpha} b^{(k)}_2 \\ < \Big( \frac{1+\alpha}{2}\Big)\Big( \frac{1-\alpha}{2}\Big)^{k_2}
      \end{split}
    \end{align}

    We call, 
    \begin{align}
        \theta_\alpha=\Bigg(1-\Big(\frac{1-\alpha}{2}\Big)^{k_\alpha}\Bigg).
    \end{align}

    As in the case of \cref{thm:thm2}, it is now enough to consider the following two cases:
    \begin{itemize}
        \item $\Delta_2=\Big( \frac{1-\alpha}{2} \Big)^{k_2 +1}$. In this case, we have 
        
        \begin{align}
            \Delta_3=\Delta_2 -(P_3 -P_2)> \Big( \frac{1-\alpha}{2} \Big)^{k_2 +1} - \Big( \frac{1+\alpha}{2}\Big)\Big( \frac{1-\alpha}{2}\Big)^{k_2}\\ > -\Big(  \frac{1-\alpha}{2}\Big)^{k_2}\alpha \\ > - \Big(  \frac{1-\alpha}{2}\Big)^{k_2+1}
        \end{align}

        \item $\Delta_2=\Big( \frac{1-\alpha}{2} \Big)^{k_2}$. In this case, in the worst case, by considering $t_\alpha=\lceil \frac{1}{\theta_\alpha^{2t_\alpha -1} }\rceil$ many sequences $a^{(j)}$ which we increase each by an amount $\Big( \frac{1+\alpha}{2}\Big)\Big( \frac{1-\alpha}{2}\Big)^{k_2}$, we get, in a case analogous to \cref{thm:thm2} by considering the lower bound on $P_3 -P_2$, that $\Delta_3 <\big(\frac{1-\alpha}{2} \big)^{k_2+1}$.
    \end{itemize}

    \bigskip

    Thus we have the condition for $\alpha\leq 1/3$:
    \begin{align}\label{eq:eqimpp}
        t_\alpha=s_\alpha=\Bigg\lceil \frac{1}{\theta_\alpha} \Bigg\rceil=\Bigg\lceil \frac{1}{\Bigg(1-\Big(\frac{1-\alpha}{2}\Big)^{k_\alpha}\Bigg)^{2t_\alpha -1}} \Bigg\rceil
    \end{align}

    Further on, the dynamics works in the same way so that we get $|\Delta_k| =|x-P_k|\to 0$ as $k\to \infty$.

    \bigskip

    Next, the case when $\alpha>1/3$ is again very similar to the corresponding case of \cref{thm:thm2}: the analogue of \cref{eqnn}is given explicitly as:
    \begin{align}\label{eqnnn}
        \Big( \frac{1+\alpha}{2}\Big)\Big( \frac{1-\alpha}{2}\Big)^{k_2 +1}< 2\Big( \frac{1-\alpha}{2} \Big)^{k_2 +1}
    \end{align}

    Also, the case corresponding to \cref{eqmn} is given by considering the least positive integer $p_{\alpha}$ so that:
    \begin{align}\label{eqmnn}
    p_{\alpha}\theta_{\alpha}^{2t_\alpha -1} \Big( \frac{1+\alpha}{2}\Big)\Big( \frac{1-\alpha}{2}\Big)^{k_2+1}>\Big(\frac{1-\alpha}{2}\Big)^{k_2 }\Big(\frac{3\alpha-1}{2}\Big)\\ =\Big(\frac{1-\alpha}{2}\Big)^{k_2 }\alpha -\Big(\frac{1-\alpha}{2}\Big)^{k_{2}+1}
    \end{align}

    This gives us,
    \begin{align}\label{eqmnn1}
        p_{\alpha}=\Bigg\lceil \frac{(\frac{3\alpha -1}{2})}{\Big( 1- \Big(\frac{1-\alpha}{2}\Big)^{k_\alpha} \Big)^{2t_\alpha -1}(\frac{1-\alpha^2}{4})} \Bigg\rceil,
    \end{align}

    while we also have 
    \begin{align}\label{eqmnn2}
    s_{\alpha}=\Bigg\lceil \frac{1}{\Big( 1- \Big(\frac{1-\alpha}{2}\Big)^{k_\alpha} \Big)^{2t_\alpha -1}}\Bigg\rceil,
    \end{align}

    and we recall that $t_\alpha=s_\alpha +p_\alpha$.

    Thus we have that:
    \begin{align}
        t_\alpha = \Bigg\lceil \frac{1}{\Big( 1- \Big(\frac{1-\alpha}{2}\Big)^{k_\alpha} \Big)^{2t_\alpha -1}}\Bigg\rceil + \Bigg\lceil \frac{(\frac{3\alpha -1}{2})}{\Big( 1- \Big(\frac{1-\alpha}{2}\Big)^{k_\alpha} \Big)^{2t_\alpha -1}(\frac{1-\alpha^2}{4})} \Bigg\rceil.
    \end{align}

        The condition in \cref{eqnnn} above ensures that in the case when $\Delta_2 = \Big(\frac{1-\alpha}{2}\Big)^{k_2+1}$, by increasing exactly one of the $a^{(j)}$'s, for some $j\in\{1+ t_{\alpha} ,\dots, (p_{\alpha}+t_{\alpha})\}$, by an amount of $(\frac{1+\alpha}{2})(\frac{1-\alpha}{2})^{k_2 +1}$, that in the worst case, the decrement to $\Delta_2$ is such that $\Delta_3$ is bounded from below by $-\Big( \frac{1-\alpha}{2}\Big)^{k_2 +1}$.

        The second case above ensures that in the other extreme case when $\Delta_2 = \Big(\frac{1-\alpha}{2}\Big)^{k_2}\alpha$, by considering up-to all of the $p_{\alpha}$ many $a^{(j)}$ sequences and increasing them by an amount of $((1+\alpha)/2)((1-\alpha)/2)^{k_2 +1}$, that in the worst case, we have that $\Delta_3$ is bounded from above by $\Big( \frac{1-\alpha}{2}\Big)^{k_2 +1}$.

        This process iterates at every step, with the argument repeating for each of the stages with the exponents $k_q \to \infty$ as $q\to \infty$.

        Further on, the dynamics iterates exactly as in the case of $\alpha>1/3$ for \cref{thm:thm2}; we would need a further $2p_{\alpha}$ many sequences.

        For some $k_3\geq k_2$, it is enough to consider the following two cases:

         \begin{itemize}
        \item $\Delta_3=\Big( \frac{1-\alpha}{2} \Big)^{k_{3}+2}$; in this case, if we increase exactly one of the sequences $a^{(j)}$ for some fixed $1\leq j\leq s_{\alpha}$, then again we would conclude that 
        \begin{align}
            \Delta_4 \geq -\Big(\frac{1-\alpha}{2}\Big)^{k_3 +1}\alpha.
        \end{align}

        In this case, $\Big(\frac{1-\alpha}{2}\Big)^{k_3 +1}\alpha\geq \Big(\frac{1-\alpha}{2}\Big)^{k_3 +2}$, and we would have to recursively employ the argument following \cref{eqmnn1} above, with $p_{\alpha}$ many sequences $b^{(j)}$ with $b^{(j)}$ being initialized to $1$, and which are decreased in the manner identical to the one above, and this would make $\Delta_4\geq -\Big( \frac{1-\alpha}{2} \Big)^{k_3 +2}$. Because of the symmetry of the dynamical system, we have the requirement of the other set of $p_\alpha$ sequences in the later step. 

        \item $\Delta_3=\Big( \frac{1-\alpha}{2} \Big)^{k_{3}+1}$; here we have to increase up to $s_{\alpha}$ many of the corresponding $a^{(j)}$ and for each of them  we get a decrement to $\Delta_3$ of the amount whose magnitude is lower bounded by $\Big(1-\Big(\frac{1-\alpha}{2}\Big)^{k_\alpha}\Big)^{2t_\alpha -1} \Big( \frac{1+\alpha}{2}\Big)\Big( \frac{1-\alpha}{2}\Big)^{k_2} $, for $j$ in the range $\{1,2,\dots,s_{\alpha}\}$, and then again we get: $$-\Big(\frac{1-\alpha}{2}\Big)^{k_{3}+1}\alpha<\Delta_4 \leq \Big(\frac{1-\alpha}{2}\Big)^{k_{3}+2},$$ and the dynamical system iterates again with the use possibly of $2p_\alpha$ many sequences as in the previous case.
    \end{itemize}

\end{proof}

Given any fixed $0<\alpha<1$, we have taken the smallest permissible positive integer $t_\alpha$ and $k_\alpha$, in order to get the fewest number of terms needed to get the biggest possible open interval.

\section{Products of different Cantor sets.}

In this case, as in Section 5, we consider two different sets of sequences;
\begin{equation}\label{ine}
    \alpha_1\geq \dots\geq\alpha_n, \ \
    \beta_1\geq \dots\geq\beta_p,
\end{equation}

for some integers $n,p$ to be determined later, and central Cantor sets with these parameters. We also require for simplicity that $\alpha_1 =\beta_1$, although this is not strictly necessary. 

As in Section 5, we consider the sequences $a^{(j)}_k, k\geq 0, \forall j=1,2,\dots,n$ with $a^{(j)}_1 = (1+\alpha_j)/2$ for all $j=1,2,\dots,n$ and then $b^{(j)}_k, k\geq 0, \forall j=1,2,\dots,p$ with $b^{(j)}_1 = 1$ for all $j=1,2,\dots,p$. Consider similar to Section 5, for each $k\geq 1$, the expression 
    \begin{align}
        P_k=\prod\limits_{j=1}^{n}a^{(j)}_k \prod\limits_{j=1}^{p} b^{(j)}_k.
    \end{align}

    Recall, we defined in Section 6,
        \begin{align*}
        \theta_\alpha=\Bigg(1-\Big(\frac{1-\alpha}{2}\Big)^{k_\alpha}\Bigg).
    \end{align*}

and here we define a parameter, 

    \begin{align}\label{eq77}
        \chi:=\Bigg(\prod\limits_{i=1}^{n}\theta_{\alpha_i}\prod\limits_{i=1}^{p}\theta_{\beta_i}\Bigg)
    \end{align}

We note that in this case we are making the lower bound corresponding to \cref{eq62} weaker by putting in an additional $\theta_{\alpha_1}$ factor on the left. Thus the contribution to the total number of terms, as in \cref{eq:eqimpp} increases in this case.

We state the main theorem:
\begin{theorem}
    Consider the sequences $\alpha_1\geq \alpha_2\geq \dots\geq \alpha_n$ and $\beta_1\geq \beta_2\geq\dots\beta_p$ as defined earlier, with $\alpha_1 =\beta_1$, with the constraints that,
\begin{align}\label{eq79}
    \chi\sum\limits_{i=1}^{n_1}\Big( \frac{1-\alpha_i}{2} \Big)^{2}>\Big( \frac{3\alpha_1 -1}{1+\alpha_1} \Big),  \    \chi \sum\limits_{i=1+n_1}^{n_1 + n_2} \Big( \frac{1-\alpha_i}{2} \Big)>1.
\end{align}    
     Here, $\chi$ is defined above, and we have $n_1 +n_2=n$. Further, we require, with $r_1 +r_2 =p$, that,
\begin{align}
    \chi\sum\limits_{i=1}^{r_1}\Big( \frac{1-\beta_i}{2} \Big)^{2}>\Big( \frac{3\beta_1 -1}{1+\beta_1} \Big),\      \chi \sum\limits_{i=1+r_1}^{r_1 + r_2} \Big( \frac{1-\beta_i}{2} \Big)>1.
\end{align}       
    
    Then the following intervals, 
\begin{align}
    \Bigg[ \Bigg( \prod\limits_{i=1}^{p} \Bigg( 1-\bigg(\frac{1-\beta_i}{2}\bigg)^{k_{\beta_i}} \Bigg) \Bigg) \Bigg(  1-\bigg(\frac{1-\alpha_1}{2}\bigg)^{k_{\alpha_1}} \Bigg) , \Bigg( \prod\limits_{i=2}^{p} \Bigg( 1-\bigg(\frac{1-\beta_i}{2}\bigg)^{k_{\beta_i}} \Bigg) \Bigg)\Bigg],
\end{align}

\begin{align}
    \Bigg[ \Bigg( \prod\limits_{i=1}^{n} \Bigg( 1-\bigg(\frac{1-\alpha_i}{2}\bigg)^{k_{\alpha_i}} \Bigg) \Bigg) \Bigg(  1-\bigg(\frac{1-\beta_1}{2}\bigg)^{k_{\beta_1}} \Bigg) , \Bigg( \prod\limits_{i=2}^{n} \Bigg( 1-\bigg(\frac{1-\alpha_i}{2}\bigg)^{k_{\alpha_i}} \Bigg) \Bigg)\Bigg],
\end{align}
    are contained in the set, 
    \begin{align}
        \Gamma =\Bigg\{ \prod\limits_{k=1}^{n}c_k\prod\limits_{l=1}^{p} d_l: c_k \in C_{\alpha_k}, d_{l}\in C_{\beta_l}  \Bigg\}
    \end{align}

\end{theorem}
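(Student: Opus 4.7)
The plan is to extend the dynamical argument of Theorem 5 to distinct Cantor-set parameters, incorporating the bookkeeping of Theorem 4. I would introduce $n+p$ sequences $a_k^{(j)}\in C_{\alpha_j}$ for $j=1,\dots,n$ and $b_k^{(j)}\in C_{\beta_j}$ for $j=1,\dots,p$, initialized as in the preamble to this section so that $P_1 = \prod_j a_1^{(j)}\prod_j b_1^{(j)}$ lies at the top of the target interval, and set $\Delta_k = x - P_k$. For $x$ in the first interval with $x \le P_1$ (the other half being symmetric), factor out a single $b$: pick the unique $x_0$ with $x=\bigl(\prod_{j=1}^n a_1^{(j)}\bigr) x_0 \bigl(\prod_{j=2}^p b_1^{(j)}\bigr)$, and if $x_0 \notin C_{\beta_1}$, reset $b_2^{(1)}$ to the left endpoint of the gap of length $\alpha_1((1-\beta_1)/2)^{k_1}$, $k_1\ge 1$, containing $x_0$. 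This produces the base estimate $\Delta_2 < \alpha_1((1-\beta_1)/2)^{k_1}\prod_j a_1^{(j)}\prod_{j\ge 2}b_1^{(j)}$, exactly analogous to the base step in Section 6.

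For the inductive step, increasing one $a^{(j_0)}$ by $((1+\alpha_{j_0})/2)((1-\alpha_{j_0})/2)^q$ in $C_{\alpha_{j_0}}$ changes $P_k$ by that amount times the remaining $n+p-1$ factors, whose infimum is bounded below by $\chi/\theta_{\alpha_{j_0}} \ge \chi$, where $\chi$ is as in \cref{eq77}. To synchronize the shifts across distinct $\alpha_j$'s one uses calibration exponents $n_{\alpha_j,l}$ analogous to \cref{sec4ineq}--\cref{eq6imp}, so that a calibrated shift at any index $j$ produces a change of at least $\chi((1-\alpha_1)/2)^l((1+\alpha_1)/2)$ and at most $((1-\alpha_1)/2)^l((1+\alpha_1)/2)$. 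In the large-shift regime, calibrated shifts on the $n_2$ sequences $j = n_1+1,\dots,n_1+n_2$ reduce $|\Delta|$ by a factor of $(1-\alpha_1)/2$; the minimality of $n_2$ making the second inequality of \cref{eq79} hold is precisely what is required. In the small-shift regime, needed when $\alpha_1 > 1/3$, one additionally employs the $n_1$ sequences $j=1,\dots,n_1$ shifted by the smaller amount $((1+\alpha_j)/2)((1-\alpha_j)/2)^{n_{\alpha_j,l}+1}$; the first inequality of \cref{eq79} then furnishes the product analogue of \cref{eqmn}--\cref{eqmnn1}. The $\beta$-side inequalities play the symmetric role for decrements of the $b^{(j)}$'s, and the second interval of the statement is produced by interchanging the roles of the two groups in the initial factorization of $x$.

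The principal obstacle is that, in contrast to the sum case of Theorem 4 where the cost of a single shift involves only an individual $((1-\alpha_j)/2)^{m-1}$ factor, here each shift is scaled by the full product $\chi$, because a shift of one factor multiplies through all the other factors. The required number of sequences therefore grows like $1/\chi$, and to make the two inequalities of \cref{eq79}, together with their $\beta$-analogue, admit finite splits $n_1+n_2=n$ and $r_1+r_2=p$, one must choose the depths $k_{\alpha_j}, k_{\beta_j}$ large enough, in the spirit of the tangency calibration of Section 6, so that each $\theta_{\alpha_j}, \theta_{\beta_j}$ is sufficiently close to $1$; the hypothesis $\alpha_1=\beta_1$ streamlines this by allowing the $\alpha$- and $\beta$-dynamics to share the same exponent $l$ at each iteration. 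Once these calibrations are fixed, the Cauchy sequences $\{a_k^{(j)}\}, \{b_k^{(j)}\}$ converge to elements of their respective Cantor sets, and continuity of multiplication together with $\Delta_k\to 0$ gives the representation $x=\prod a_\infty^{(j)}\prod b_\infty^{(j)}\in\Gamma$.
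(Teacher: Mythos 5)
Your proposal follows essentially the same route as the paper: you run the product dynamics of Theorem 5 on the restricted intervals $[\theta_{\alpha_j},1]$, $[\theta_{\beta_j},1]$, use the calibration exponents $n_{\alpha_j,l}$, $n_{\beta_j,l}$ from the sums-of-different-Cantor-sets argument, lower-bound the effect of any single shift by the factor $\chi$, and thereby arrive at exactly the two displayed inequality pairs as the sufficient conditions, with the two intervals obtained by exchanging the roles of the $\alpha$- and $\beta$-families in the initialization. This matches the paper's own (terse) derivation, and your observation that the depths $k_{\alpha_j},k_{\beta_j}$ must be taken large enough for the $\chi$-weighted inequalities to admit finite splits $n_1+n_2=n$, $r_1+r_2=p$ is consistent with the tangency calibration the paper imports from Section 6.
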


    In this case, for each $l\geq 1$, we again have the chain of inequalities from \cref{section6ref}, and we further define for each positive integer $l$, for each $\alpha_j\geq 2$ and for each $\beta_j \geq 2$, we find the minimum integers $n_{\alpha_j,l}$ and $n_{\alpha_j,l}$ so that,
\begin{align}\label{sec4ineq}
\Big(\frac{1-\alpha_{j}}{2}\Big)^{n_{\alpha_{j},l}}\Big(\frac{1+\alpha_j}{2}\Big)  \leq \Big( \frac{1-\alpha_1}{2}\Big)^{l}\Big( \frac{1+\alpha_1}{2}\Big) , \Big(\frac{1-\beta_{j}}{2}\Big)^{n_{\beta_{j},l}}\Big(\frac{1+\beta_j}{2}\Big)  \leq \Big( \frac{1-\beta_1}{2}\Big)^{l}\Big( \frac{1+\beta_1}{2}\Big).
\end{align}

This gives us the expressions for $n_{\alpha_j,l},n_{\beta_j,l}$ as in \cref{eq6imp}. Now we use the techniques of Section 5 in this problem. Without loss of generality, we work with the case where $\alpha_1=\beta_1 \geq 1/3$. We work with the analog of \cref{eqmnn} with $\alpha_1$, and require:
\begin{align*}
    \chi\sum\limits_{i=1}^{n}\Big( \frac{1-\alpha_i}{2} \Big)^{2}\Big( \frac{1-\alpha_i}{2} \Big)^{n_{\alpha_i,l}-1}\Big( \frac{1+\alpha_i}{2} \Big)>\Big( \frac{1-\alpha_1}{2} \Big)^{l}\Big( \frac{3\alpha_1 -1}{2} \Big).
\end{align*}

Because of how the $n_{\alpha_i,l}$ are defined, it is enough to require that,
\begin{align*}
    \chi\sum\limits_{i=1}^{n_1}\Big( \frac{1-\alpha_i}{2} \Big)^{2}\Big( \frac{1-\alpha_1}{2} \Big)^{l}\Big( \frac{1+\alpha_1}{2} \Big)>\Big( \frac{1-\alpha_1}{2} \Big)^{l}\Big( \frac{3\alpha_1 -1}{2} \Big).
\end{align*}

This gives us the requirement,
\begin{align}\label{eq79}
    \chi\sum\limits_{i=1}^{n_1}\Big( \frac{1-\alpha_i}{2} \Big)^{2}>\Big( \frac{3\alpha_1 -1}{1+\alpha_1} \Big).
\end{align}

We further require an analog of \cref{eq:eqimpp}:
\begin{align*}
    \chi \sum\limits_{i=1}^{n_1} \Big( \frac{1-\alpha_i}{2} \Big) \Big( \frac{1+\alpha_i}{2} \Big)\Big( \frac{1-\alpha_i}{2} \Big)^{n_{\alpha_i,l}-1} >\Big( \frac{1-\alpha_1}{2} \Big)^{l}\Big(\frac{1+\alpha_1}{2}\Big).
\end{align*}

Further, because of the definition of $n_{\alpha_i,l}$, it is enough to require that:
\begin{align}\label{eq80}
    \chi \sum\limits_{i=1+n_1}^{n_1 + n_2} \Big( \frac{1-\alpha_i}{2} \Big)>1.
\end{align}

and finally we have $n_1 +n_2=n$. An analogous set of constraints is needed for the $\beta_i$ parameters. Given \cref{eq77}, the \cref{eq79,eq80} are coupled inequalities in the parameters $n_1$ and $n_2$. This would have a set of solutions in the parameters $\alpha_i$ and then the $n_1,n_2$. A similar statement holds for $\beta_i$ parameters.

\bigskip

We end with a discussion for the arbitrary sequence $\gamma_n\leq \gamma_{n-1}\leq \dots\leq \gamma_1$. We can split it up into two disjoint subsequences $\alpha_k\leq \alpha_{k-1}\leq \alpha_{k-2}\leq \alpha_1 $ and $\beta_j \leq \beta_{j-1}\leq \dots\leq \beta_1$ and consider the sets $A=\{\alpha_t:\alpha_t\leq \beta_1\}$ and $B=\{\beta_t:\beta_t\leq \alpha_1\}$. In this case, one can  verify that it would be enough to require analogs of \cref{eq79,eq80} which are the following:
\begin{align}\label{eq81}
    \chi_1\sum\limits_{i\in A_1}\Big( \frac{1-\alpha_i}{2} \Big)^{2}>\Big( \frac{3\beta_1 -1}{1+\beta_1} \Big),\ \ \chi_1 \sum\limits_{i\in A_2} \Big( \frac{1-\alpha_i}{2} \Big)>1.
\end{align}

Further we also require:
\begin{align}\label{eq82}
    \chi_1\sum\limits_{i=1}^{n_1}\Big( \frac{1-\beta_i}{2} \Big)^{2}>\Big( \frac{3\beta_1 -1}{1+\beta_1} \Big),\ \ \chi_1 \sum\limits_{i=(n_1 +1)}^{n_1 +n_2} \Big( \frac{1-\beta_i}{2} \Big)>1.
\end{align}

Here we have the disjoint union $A=A_1 \sqcup A_2$ and also $n_1 +n_2 =j$.

Symmetrically, we can also have the following:
\begin{align}\label{eq83}
    \chi_2\sum\limits_{i\in B_1}\Big( \frac{1-\beta_i}{2} \Big)^{2}>\Big( \frac{3\alpha_1 -1}{1+\alpha_1} \Big),\ \ \chi_2 \sum\limits_{i\in B_2} \Big( \frac{1-\beta_i}{2} \Big)>1,
\end{align}

as well as,
\begin{align}\label{eq84}
    \chi_2\sum\limits_{i=1}^{m_1}\Big( \frac{1-\alpha_i}{2} \Big)^{2}>\Big( \frac{3\alpha_1 -1}{1+\alpha_1} \Big),\ \ \chi_2 \sum\limits_{i=(m_1 +1)}^{m_1 +m_2} \Big( \frac{1-\alpha_i}{2} \Big)>1,
\end{align}

where $B=B_1 \sqcup B_2$ and also $m_1 +m_2=k$. 

Here, $\chi_1:=\prod\limits_{i\in A}\theta_{\alpha_i}\prod\limits_{t=1}^{j} \theta_{\beta_t},\ \ \chi_2:=\prod\limits_{i\in B}\theta_{\beta_i}\prod\limits_{t=1}^{k} \theta_{\alpha_t}$.  

Associated to each $\gamma_i, i\in \{1,\dots,n\}$ is also the $k_{\gamma_i}$ factor that appears in $\theta_{\gamma_i}$ which are optimized, and which would give us, subject to satisfying \cref{eq81,eq82}, the open intervals
\begin{align}
   I_1= \Bigg[ \Bigg( \prod\limits_{i\in A} \Bigg( 1-\bigg(\frac{1-\alpha_i}{2}\bigg)^{k_{\alpha_i}} \Bigg) \Bigg) \Bigg(  1-\bigg(\frac{1-\beta_1}{2}\bigg)^{k_{\beta_1}} \Bigg) , \Bigg( \prod\limits_{i\in A} \Bigg( 1-\bigg(\frac{1-\alpha_i}{2}\bigg)^{k_{\alpha_i}} \Bigg) \Bigg)\Bigg],
\end{align}

\begin{align}
   I_2= \Bigg[ \Bigg( \prod\limits_{i=1}^{j} \Bigg( 1-\bigg(\frac{1-\beta_i}{2}\bigg)^{k_{\beta_i}} \Bigg) \Bigg), \Bigg( \prod\limits_{i=2}^{j} \Bigg( 1-\bigg(\frac{1-\beta_i}{2}\bigg)^{k_{\beta_i}} \Bigg) \Bigg)\Bigg]
\end{align}

Further, subject to satisfying the constraints of \cref{eq83,eq84}, we get the following two intervals: 
\begin{align}
   I_3= \Bigg[ \Bigg( \prod\limits_{i\in B} \Bigg( 1-\bigg(\frac{1-\beta_i}{2}\bigg)^{k_{\beta_i}} \Bigg) \Bigg) \Bigg(  1-\bigg(\frac{1-\alpha_1}{2}\bigg)^{k_{\alpha_1}} \Bigg) , \Bigg( \prod\limits_{i\in B} \Bigg( 1-\bigg(\frac{1-\beta_i}{2}\bigg)^{k_{\beta_i}} \Bigg) \Bigg)\Bigg],
\end{align}

\begin{align}
   I_4= \Bigg[ \Bigg( \prod\limits_{i=1}^{k} \Bigg( 1-\bigg(\frac{1-\alpha_i}{2}\bigg)^{k_{\alpha_i}} \Bigg) \Bigg), \Bigg( \prod\limits_{i=2}^{k} \Bigg( 1-\bigg(\frac{1-\alpha_i}{2}\bigg)^{k_{\alpha_i}} \Bigg) \Bigg)\Bigg]
\end{align}

Finally, we note that the intervals obtained here will change if some of the Cantor sets have support in a unit interval other than $[0,1]$. Further, one can employ the methods of Section 4 to expand the open intervals obtained in the case of products of Cantor sets, which we have not pursued here. 

\section{Sums and products under images of $C^1$ maps.}

We conclude with an outline of the results one would get for one function $\phi(x)$ that is $C^1$ and supported in $[0,1]$. \footnote{One can in principle extend these to multiple functions with different Cantor sets, which we don't do here.} 

For the problem of the sumsets, suppose that the first derivative being continuous and thus bounded on $[\frac{1+\alpha}{2},1]$, is such that $0\leq g_2\leq \phi'(x)\leq g_1$  and $\phi$ is monotonic on $[\frac{1+\alpha}{2},1]$. In this case, one can use the mean value theorem and these bounds to alter the upper bound on \cref{eqimp1} and the upper and lower bounds on \cref{eqimp2}, so that we would get the following.

Fix $\alpha \in (0,1)$. For this fixed $\alpha$ let $C_{\alpha}$ be the Cantor set as in the introduction. Also, let
$$s_{\alpha,\phi} = \left\lceil \frac{g_1}{g_2}\right\rceil, \qquad k_{\alpha,\phi}=\begin{cases} \Bigg\lceil \frac{\frac{3\alpha -1}{2}}{\big(\frac{g_2}{g_1}\big)\big(\frac{1-\alpha^2}{4}\big)} \Bigg\rceil  \\
0 & \alpha \leq 1/3 \end{cases},$$
\noindent and let $r_{\alpha,\phi}=2s_{\alpha,\phi}+2k_{\alpha,\phi}$. Set  $$\Gamma_{\alpha, \phi} = \left\{\sum_{k=1}^{r_{\alpha,\phi}} \phi(c_k) : c_k \in C_{\alpha}\right\}$$

Then one has the interval $I$ similar to the one defined in Theorem 2;

$$I = \Bigg[\Big(\frac{r_{\alpha,m}}{2}-1\Big)\phi(1)+\Big(\frac{r_{\alpha,m}}{2}+1\Big)\phi\Big(\frac{1+\alpha}{2}\Big), \ \Big(\frac{r_{\alpha,m}}{2}+1\Big)\phi(1)+\Big(\frac{r_{\alpha,m}}{2}-1\Big)\phi\Big(\frac{1+\alpha}{2}\Big)\Bigg] $$ and that we have $I\subset \Gamma_{\alpha, \phi} := \Bigg\{\sum\limits_{k=1}^{r_{\alpha,\phi}} \phi(c_k) : c_k \in C_{\alpha}\Bigg\} $.

\bigskip

Consider the case of products, and the quantities,

\begin{align}\label{eq:eqprod}
    s_\alpha=\Bigg\lceil \frac{\frac{g_3}{g_4}}{ \Bigg(\phi\Big(1-\Big(\frac{1-\alpha}{2}\Big)^{k_\alpha}\Big) \Bigg)^{2t_\alpha -1}} \Bigg\rceil, \  p_{\alpha}=\begin{cases} \Bigg\lceil \frac{\frac{g_3}{g_4}(\frac{3\alpha -1}{2})}{\Big(\phi\Big( 1- \Big(\frac{1-\alpha}{2}\Big)^{k_\alpha} \Big)\Big)^{2t_\alpha -1}(\frac{1-\alpha^2}{4})} \Bigg\rceil & \alpha>\frac{1}{3} \\ 0 & \alpha\leq \frac{1}{3} \end{cases},
\end{align}

where we have the bounds $0\leq g_4\leq \phi'(x)\leq g_3$ and $\phi$ is monotonic on the range $\Big[\Big(1-\Big(\frac{1-\alpha}{2}\Big)^{k_\alpha},1\Big]$.

Then we contain the interval

\begin{align}
    I_\alpha=\Bigg[\Bigg(\phi\Big(1-\Big(\frac{1-\alpha}{2}\Big)^{k_\alpha}\Big)\Bigg)^{t_\alpha+1}, \Bigg(\phi\Big(1-\Big(\frac{1-\alpha}{2}\Big)^{k_\alpha}\Big)\Bigg)^{t_\alpha -1}\Bigg]\subset \Gamma_{\alpha, \phi} := \Bigg\{\prod\limits_{k=1}^{2 t_{\alpha}} \phi(c_k) : c_k \in C_{\alpha}\Bigg\}.
\end{align}

Here, as in Section 6, we again have 

$$\begin{cases}
    t_\alpha=s_{\alpha}  &   \alpha\leq\frac{1}{3}\\

    t_\alpha=s_{\alpha} +p_\alpha  &  \alpha>\frac{1}{3}.
\end{cases}$$

Note that if the function $\phi$ is positive within $[0,1]$ one can alter the argument to run the dynamics within the entire interval $[0,1]$ as opposed to the restricted interval $[\frac{1+\alpha}{2},1]$.

\section{Acknowledgements} The author is grateful to Arun Suresh for many discussions, and to Pablo Shmerkin, Anton Gorodetski and Petros Valettas for feedback on these questions.

\end{document}